\newcommand{\ifarticle}[2]{
    \csname@ifclassloaded\endcsname{beamer}{#2}{#1}
}
\newcommand{\ifbook}[2]{
    \csname@ifclassloaded\endcsname{amsbook}{#1}{#2}
}
        \setlist{topsep=2pt,itemsep=2pt,partopsep=2pt,parsep=2pt} 
        \xpretocmd{\@adminfootnotes}{\let\@makefntext\BHFN@OldMakefntext}{}{}
        \renewcommand\@makefntext[1]{%
        \@ifundefined{@makefnmark}
            {}
            {%
            \renewcommand\@makefnmark{%
            \mbox{%
                \textsuperscript{%
                \normalfont
                \hyperref[\BackrefFootnoteTag]{\@thefnmark}%
                }%
            }\,%
            }%
            \BHFN@OldMakefntext{#1}%
        }%
        }
        \LetLtxMacro{\BHFN@Old@footnotemark}{\@footnotemark}
        \renewcommand*{\@footnotemark}{%
            \refstepcounter{BackrefHyperFootnoteCounter}%
            \xdef\BackrefFootnoteTag{bhfn:\theBackrefHyperFootnoteCounter}%
            \label{\BackrefFootnoteTag}%
            \BHFN@Old@footnotemark
        }
        \def\paragraph{\@startsection{paragraph}{4}%
          \z@\z@{-\fontdimen2\font}%
          {\normalfont\bfseries}}
        \theoremstyle{plain}
            \newtheorem{theorem}{Theorem}[chapter]
            \newtheorem{theorem}{Theorem}[section]
        \newtheorem{proposition}[theorem]{Proposition}
        \newtheorem{lemma}[theorem]{Lemma}
        \newtheorem{corollary}[theorem]{Corollary}
        \newtheorem*{theorem*}{Theorem}
        \newtheorem*{corollary*}{Corollary}
        \theoremstyle{definition}
        \newtheorem{definition}[theorem]{Definition}
        \newtheorem{example}[theorem]{Example}
        \newtheorem{remark}[theorem]{Remark}
        \Crefname{axiom}{Axiom}{Axioms}
        \newenvironment{sketch}{\proof}{\endproof}
        \Crefname{theoremenumi}{Theorem}{Theorems}
            \setlist[enumerate,1]{
                ref={\csname thetheorem\endcsname.(\arabic*)}
            }%
            \setlist[enumerate,2]{
                ref={\thetheorem.(\arabic*).(\alph*)}
            }%
        \Crefname{propositionenumi}{Proposition}{Propositions}
            \setlist[enumerate,1]{
                ref={\csname theproposition\endcsname.(\arabic*)}
            }%
            \setlist[enumerate,2]{
                ref={\theproposition.(\arabic*).(\alph*)}
            }%
        \Crefname{lemmaenumi}{Lemma}{Lemmas}
            \setlist[enumerate,1]{
                ref={\csname thelemma\endcsname.(\arabic*)}
            }%
            \setlist[enumerate,2]{
                ref={\thelemma.(\arabic*).(\alph*)}
            }%
        \Crefname{corollaryenumi}{Corollary}{Corollaries}
            \setlist[enumerate,1]{
                ref={\csname thecorollary\endcsname.(\arabic*)}
            }%
            \setlist[enumerate,2]{
                ref={\thecorollary.(\arabic*).(\alph*)}
            }%
        \Crefname{definitionenumi}{Definition}{Definitions}
            \setlist[enumerate,1]{
                ref={\csname thedefinition\endcsname.(\arabic*)}
            }%
            \setlist[enumerate,2]{
                ref={\thedefinition.(\arabic*).(\alph*)}
            }%
        \Crefname{exampleenumi}{Example}{Examples}
            \setlist[enumerate,1]{
                ref={\csname theexample\endcsname.(\arabic*)}
            }%
            \setlist[enumerate,2]{
                ref={\theexample.(\arabic*).(\alph*)}
            }%
        \Crefname{axiomenumi}{Axiom}{Axioms}
            \setlist[enumerate,1]{
                ref={\csname theaxiom\endcsname.(\arabic*)}
            }%
            \setlist[enumerate,2]{
                ref={\theaxiom.(\arabic*).(\alph*)}
            }%
        \AtBeginEnvironment{\env}{%
          \pushQED{\qed}%
        }
        \AtEndEnvironment{\env}{\popQED\endexample}
    \NewDocumentCommand{\mathcommand}{mO{0}m}
     {
      \exp_args:Nc \NewCommandCopy {khue_kept_\cs_to_str:N #1} { #1 }
      \exp_args:Nc \newcommand {khue_new_\cs_to_str:N #1}[#2]{#3}
      \DeclareDocumentCommand {#1} {}
       {
        \mode_if_math:TF
         {
          \use:c {khue_new_\cs_to_str:N #1}
         }
         {
          \use:c {khue_kept_\cs_to_str:N #1}
         }
       }
     }
    \newsavebox\tikzcdbox
    \mathcommand{\h}{\textup{-}}
    \newcommand{\tx}{\mathrm}
    \mathcommand{\b}{\mathbf}
    \mathcommand{\bb}{\mathbb}
    \DeclareMathAlphabet{\bbn}{U}{bbold}{m}{n}
    \mathcommand{\sf}{\mathsf}
    \mathcommand{\u}{\underline}
    \mathcommand{\o}{\overline}
    \newcommand{\TODO}[1][TODO]{\textcolor{orange}{\textup{#1}}\xspace}
    \newcommand{\flip}[1]{\text{\rotatebox[origin=c]{-180}{$#1$}}}
    \newcommand{\datetoday}{\date{\cleanlookdateon\today}}
    \newcommand{\defeq}{\mathrel{:=}}
    \mathcommand{\d}{\mathbin{;}}
    \mathcommand{\c}{\circ}
    \newcommand{\ph}[1][]{{({-}_{#1})}}
    \newcommand{\iso}{\cong}
    \renewcommand{\equiv}{\simeq}
    \newcommand{\tto}{\Rightarrow}
    \def\slashedarrowfill@#1#2#3#4#5{%
    $\m@th\thickmuskip0mu\medmuskip\thickmuskip\thinmuskip\thickmuskip
    \relax#5#1\mkern-7mu%
    \cleaders\hbox{$#5\mkern-2mu#2\mkern-2mu$}\hfill
    \mathclap{#3}\mathclap{#2}%
    \cleaders\hbox{$#5\mkern-2mu#2\mkern-2mu$}\hfill
    \mkern-7mu#4$%
    }
    \def\rightslashedarrowfill@{%
    \slashedarrowfill@\relbar\relbar\mapstochar\rightarrow}
    \newcommand\xslashedrightarrow[2][]{%
    \ext@arrow 0055{\rightslashedarrowfill@}{#1}{#2}}
    \def\leftslashedarrowfill@{%
    \slashedarrowfill@\leftarrow\relbar\mapsfromchar\relbar}
    \newcommand\xslashedleftarrow[2][]{%
    \ext@arrow 0055{\leftslashedarrowfill@}{#1}{#2}}
    \newcommand{\op}{{}^\tx{op}}
    \newcommand{\lx}{\mathbin{\rhd}}
    \newcommand{\adj}{\dashv}
    \newcommand{\radj}[1]{\mathrel{\adj_{#1}}}
    \DeclareFontFamily{U}{min}{}
    \DeclareFontShape{U}{min}{m}{n}{<-> udmj30}{}
    \mathcommand{\comma}{\downarrow}
    \newcommand{\copi}{\flip\pi}
    \newsavebox{\whitecircstar}\sbox{\whitecircstar}{\kern.075em\tikz{\node[draw, circle,line width=.36pt, inner sep=0]{$*$};}\kern.075em}
    \newsavebox{\blackcircstar}\sbox{\blackcircstar}{\kern.075em\tikz{\node[fill, circle, line width=.36pt, inner sep=0, text=white]{$*$};}\kern.075em}
    \def\widebreve{\mathpalette\wide@breve}
    \def\wide@breve#1#2{\sbox\z@{$#1#2$}%
         \mathop{\vbox{\m@th\ialign{##\crcr
    \kern0.08em\brevefill#1{0.8\wd\z@}\crcr\noalign{\nointerlineskip}%
                        $\hss#1#2\hss$\crcr}}}\limits}
    \def\brevefill#1#2{$\m@th\sbox\tw@{$#1($}%
      \hss\resizebox{#2}{\wd\tw@}{\rotatebox[origin=c]{90}{\upshape(}}\hss$}
    \NewDocumentCommand{\jrule}{om}{%
        \IfNoValueTF{#1}
            {\textsc{#2}}
            {$#1$-\textsc{#2}}%
    }
    \newcommand{\Set}{{\b{Set}}}
    \newcommand{\Alg}{\b{Alg}}
    \newcommand{\ff}{fully faithful}
    \newcommand{\ffness}{full faithfulness}
    \newcommand{\ioo}{identity-on-objects}
    \newcommand{\ie}{i.e.\@\xspace}
    \newcommand{\cf}{cf.\@\xspace}
    \NewDocumentCommand{\etc}{t.}{etc.\@\xspace}
    \NewDocumentCommand{\ibid}{t.}{ibid.\@\xspace}
    \NewDocumentCommand{\loccit}{t.}{loc.\ cit.\@\xspace}
\patchcmd{\beamer@sectionintoc}{\vfill}{\vskip\itemsep}{}{}
  \colorlet{colour-bg}{black!85} 
  \definecolor{colour-primary}{HTML}{cc80ff} 
  \colorlet{colour-text}{black!10} 
  \colorlet{colour-subtle}{black!40} 
  \colorlet{colour-block-bg}{black!80} 
  \definecolor{colour-warning-bg}{HTML}{ffea80} 
  \definecolor{colour-warning-primary}{HTML}{e08152} 
  \apptocmd{\frame}{}{\justifying}{}
  \newtheorem{proposition}[theorem]{\translate{Proposition}}
  \renewenvironment<>{block}[1]{%
      \begin{actionenv}#2%
        \par%
        \usebeamertemplate{block begin}}
      {\par%
        \usebeamertemplate{block end}%
      \end{actionenv}}
  \renewenvironment<>{exampleblock}[1]{%
      \begin{actionenv}#2%
          \par%
          \only<presentation>{
            \setbeamercolor{local structure}{parent=example text}}%
          \usebeamertemplate{block example begin}}
        {\par%
          \usebeamertemplate{block example end}%
        \end{actionenv}}
\newcommand{\Kl}{\b{Kl}}
\renewcommand{\Im}{\b{Im}}
\newcommand{\idem}{_{\tx{idem}}}
\title{Idempotence for relative monads}
\author{Nathanael Arkor}
\address{Department of Software Science, Tallinn University of Technology, Estonia}
\author{Andrew Slattery}
\address{Department of Computer Science and Technology, University of Cambridge, United Kingdom}
\thanks{N.\ Arkor was supported by a departmental postdoctoral grant from the Department of Software Science at Tallinn University of Technology.}
\subjclass{18C15, 18C20}
\thanks{A.\ Slattery was funded in part by the United States Air Force Office of Scientific Research under grant FA9550-23-1-0728 (New Spaces for Denotational Semantics; Dr. Tristan Nguyen, Program Manager). Views and opinions expressed are however those of the author only and do not necessarily reflect those of AFOSR.}
\subjclass{18C15, 18C20}
\begin{document}

\begin{abstract}
    We study the concept of idempotence for relative monads, which exhibits several subtleties not present for non-relative monads. In particular, there is a bifurcation of notions of idempotence in the relative setting, which are indistinguishable for idempotent monads. As a special case, we obtain several characterisations of idempotence for monads in extension form.
\end{abstract}

\maketitle

Idempotent monads were introduced by \textcite{appelgate1969categories} in the study of codensity monads; a monad~$(t, \mu, \eta)$ is \emph{idempotent} if its multiplication $\mu \colon tt \tto t$ is invertible. Relative monads were introduced by \textcite{lavendhomme1974variations} and developed further by \textcite{altenkirch2010monads} (with precursors in \cite{walters1969alternative,walters1970categorical,diers1975jmonades,altenkirch1999monadic}) as a generalisation of monads from structured endofunctors to arbitrary functors. Here, we study the appropriate generalisation of idempotence from monads to relative monads. Notions of idempotent relative monad have appeared twice before in the literature: in \citeauthor{diers1975jmonades}'s study of \emph{idempotent $j$-monads}~\cite[\S4]{diers1975jmonades}; and, in the two-dimensional setting, in \citeauthor{fiore2018relative}'s study of \emph{lax-idempotent relative pseudomonads}~\cite[\S5]{fiore2018relative}. Unfortunately, these notions fail to satisfy the evident generalisations of known properties of idempotent monads. We show that, by strengthening the notion of idempotence for relative monads -- and consequently distinguishing between \emph{non-algebraic} and \emph{algebraic} idempotence -- we can recover many of these desirable properties. Our work is in fact motivated by the two-dimensional setting: in joint work with Philip Saville, we have studied an analogous algebraic notion of lax-idempotence for relative pseudomonads~\cite[\S5]{arkor2025bicategories}. However, as the subtleties surrounding idempotence arise even in the one-dimensional setting, it seemed valuable to provide a self-contained account.

This paper may be seen as a particularly simple case study of a general phenomenon in the theory of relative monads. In the theory of (non-relative) monads, it is typically sensible to define a property or structure of a monad and then observe that it automatically interacts well with the algebras for the monad. Intuitively, this is the case because the structure of an algebra for a monad is determined by the structure of free algebras. However, this is generally not true for relative monads: in contrast, one must typically define properties or structure both for the relative monad \emph{and} for its algebras. We expect similar observations to play an important role, for instance, in the theory of strong relative monads~\cite{slattery2023pseudocommutativity}, where one may define a notion of \emph{strong algebra} in addition to the existing notion of \emph{strong relative monad}.

\subsection*{Setting}

For expository purposes, we shall work throughout in the context of monads relative to functors between ordinary categories. The appropriate generalisations to enriched relative monads and, more generally, to relative monads in a virtual equipment~\cite{arkor2024formal} are evident, and are left to the reader; there are no essential differences to the theory presented here. Our notation primarily follows that of \textcite{arkor2024formal}, though we do not assume prior familiarity.

\subsection*{Acknowledgements}

The authors thank Philip Saville and Dylan McDermott for helpful comments.

\section{Idempotent monads and idempotent relative monads}

Throughout, we denote by $j \colon A \to E$ a fixed functor (called the \emph{root}). We begin by recalling the concept of a relative monad, before introducing the concept of idempotence therefor.

\begin{definition}[{\cite[Definition~2.1]{altenkirch2015monads}}]
    A \emph{$j$-relative monad} $T$ comprises the following data.
    \begin{enumerate}
        \item For each object $a \in A$, an object $ta \in E$ and a morphism $\eta_a \colon ja \to ta$.
        \item For each morphism $f \colon ja \to tb$ in $E$, a morphism $f^\dag \colon ta \to tb$.
    \end{enumerate}
    The following laws must be satisfied.
    \begin{enumerate}[resume]
        \item $f^\dag \eta_a = f$, for each morphism $f \colon ja \to tb$ in $E$.
        \item $(\eta_a)^\dag = 1_{ta}$, for each object $a \in A$.
        \item $(g^\dag f)^\dag = g^\dag f^\dag$, for each pair of morphisms $f \colon ja \to tb$ and $g \colon jb \to tc$.
        \qedhere
    \end{enumerate}
\end{definition}

It follows that $t$ extends uniquely to a functor $t \colon A \to E$ for which the transformations
\[\eta \colon j \tto t \colon A \to E\]
\[\dag \colon E(j, t) \tto E(t, t) \colon A\op \times A \to \Set\]
are natural~\cite[Theorem~8.12]{arkor2024formal}. We call $\eta$ the \emph{unit} and $\dag$ the \emph{extension operator}.

\begin{example}
    There is a bijection between monads on a category $E$ and monads relative to the identity functor $1_E$ on $E$~\cite[Corollary~4.20]{arkor2024formal}. Given a monad $(t, \mu, \eta)$, we define $f^\dag \defeq \mu_b \c tf$; conversely, given a $1_E$-relative monad, we define $\mu_e \defeq (1_{te})^\dag$.
\end{example}

\begin{definition}
    \label{idempotent-relative-monad}
    A $j$-relative monad $T$ is \emph{idempotent} if its extension operator $\dag \colon E(j, t) \tto E(t, t)$ is invertible.
\end{definition}

\begin{example}
    \label{composing}
    Let $j' \colon E \to I$ be a functor and $T$ be a $j'$-relative monad. Then we may form a $j'j$-relative monad $Tj$ by precomposing an arbitrary functor $j \colon A \to E$~\cite[Proposition~5.36]{arkor2024formal}: $Tj$ is idempotent if $T$ is idempotent.

    In particular, each functor $j \colon A \to E$, viewed as a trivial $j$-relative monad, is idempotent, since it is given by precomposing the identity monad on $E$ by $j$.
\end{example}

\begin{example}
    Let $j \colon A \to E$ be a functor and $T$ be a $j$-relative monad. We may form a $j'j$-relative monad $j'T$ by postcomposing a \ff{} functor $j' \colon E \to I$~\cite[Example~5.38]{arkor2024formal}. $j'T$ is idempotent if and only if $T$ is idempotent.
\end{example}

\begin{remark}
    We direct readers interested in \emph{strong} relative monads to \cite[\S2.2]{slattery2024commutativity}, noting that every idempotent strong relative monad is automatically commutative~\cite[Theorem~2.12]{slattery2024commutativity}.
\end{remark}

There are several equivalent formulations of idempotence for relative monads.

\begin{lemma}
    \label{idempotent-relative-monad-characterisations}
    The following are equivalent for a $j$-relative monad $(t, \dag, \eta)$.
    \begin{enumerate}
        \item $\dag$ is invertible.
        \item $\dag$ is an epimorphism.
        \item $\dag$ is right-inverse to $E(\eta, t)$.
        \item $\dag$ is inverse to $E(\eta, t)$.
        \item For each object $b \in A$, the object $tb$ is $\eta$-orthogonal, \ie{} every morphism $f \colon ja \to tb$ admits a unique extension along~$\eta_a$.
        \[\begin{tikzcd}
        	ta \\
        	ja & tb
        	\arrow["{\exists!}", dashed, from=1-1, to=2-2]
        	\arrow["{\eta_a}", from=2-1, to=1-1]
        	\arrow["f"', from=2-1, to=2-2]
        \end{tikzcd}\]
        \item[(1')] $E(\eta, t)$ is invertible.
        \item[(2')] $E(\eta, t)$ is a monomorphism.
        \item[(3')] $E(\eta, t)$ is left-inverse to $\dag$.
    \end{enumerate}
\end{lemma}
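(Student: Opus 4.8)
\textit{Proof proposal.} The plan is to pivot the whole lemma on a single reading of the first relative monad law. The equation $f^\dag\eta_a = f$ says exactly that the composite $E(\eta, t) \circ \dag \colon E(j, t) \tto E(j, t)$ is the identity; equivalently, $\dag$ and $E(\eta, t)$ form a section--retraction pair, so that for \emph{every} $j$-relative monad, $\dag$ is a split monomorphism (with retraction $E(\eta, t)$) and $E(\eta, t)$ is a split epimorphism (with section $\dag$). I would open the proof by recording this and drawing the immediate consequence: $\dag$ is invertible if and only if the complementary composite $\dag \circ E(\eta, t) \colon E(t, t) \tto E(t, t)$ is also the identity --- and this single equation is precisely what items~(3), (3') and~(4) amount to, once the always-valid law above is taken into account. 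So the entire list reduces to the assertion that $\dag$ is invertible.

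The ``inverse'' conditions then fall to elementary diagram-chasing, valid in any category. Because $E(\eta, t)$ is already a one-sided inverse of $\dag$, any two-sided inverse of $\dag$ must agree with it; this gives $(1) \Leftrightarrow (4)$, and the symmetric argument gives $(4) \Leftrightarrow (1')$, while $(3)$ and $(3')$ merely restate the equation $\dag \circ E(\eta, t) = 1_{E(t,t)}$. For~(2): isomorphisms are epimorphisms, and conversely, if $\dag$ is epic then $\dag \circ E(\eta, t) \circ \dag = \dag$ lets us cancel $\dag$ on the right to obtain $\dag \circ E(\eta, t) = 1_{E(t,t)}$; so $(1) \Leftrightarrow (2)$. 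Dually, since $E(\eta, t)$ is a split epimorphism, $(1') \Leftrightarrow (2')$.

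For the orthogonality reformulation~(5) I would invoke the fact that a natural transformation between $\Set$-valued functors is invertible precisely when each of its components is a bijection. Then $(1')$ says exactly that, for all $a$ and $b$, precomposition with $\eta_a$ --- that is, $E(\eta_a, tb) \colon E(ta, tb) \to E(ja, tb)$ --- is a bijection, which is word-for-word the statement that every $f \colon ja \to tb$ admits a unique extension along $\eta_a$, \ie{} that $tb$ is $\eta$-orthogonal. It is worth remarking that the existence of an extension is automatic (take $f^\dag$, using the first law), so that the real content of~(5) is the uniqueness, equivalently that $E(\eta, t)$ is a monomorphism, which is~(2'). Threading all of these equivalences together completes the proof.

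I do not anticipate any genuine obstacle: each step is soft. The two points that merit a little care are (a) pinning down the identification of the first relative monad law with one of the triangle identities for the section--retraction pair $(\dag, E(\eta, t))$ crisply enough that ``idempotence'' reduces cleanly to the complementary identity, and (b) the translation between ``unique extension along $\eta_a$'' and invertibility (respectively, monicity) of precomposition along $\eta$, which hinges on the componentwise detection of invertibility for natural transformations valued in $\Set$.
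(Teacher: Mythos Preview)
Your proposal is correct and is essentially the paper's own argument: both pivot on reading the unit law $f^\dag\eta_a = f$ as the section--retraction identity $E(\eta,t)\circ\dag = 1$, then use the elementary fact that a split monomorphism is invertible iff it is epic (dually for the split epimorphism $E(\eta,t)$), and unwind (5) as the componentwise statement of invertibility. The only packaging difference is that the paper defers the proof to the more general \cref{idempotent-algebra-characterisations} for arbitrary $T$-algebras and then specialises to free algebras, whereas you argue directly; the steps are otherwise the same.
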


We defer the easy proof to \cref{idempotent-algebra-characterisations}, where we shall establish a more general result. It follows that our definition of idempotent relative monad coincides\footnotemark{} with that of \textcite[\S4]{diers1975jmonades}, who defined idempotence in terms of condition (2'). Explicitly, condition (4) states that a relative monad is idempotent if and only if, for every morphism $g \colon ta \to tb$ in $E$, we have $(g\eta_a)^\dag = g$. Consequently, it also follows that our definition of idempotent relative monad coincides with the definition of lax-idempotent relative pseudomonad given by \textcite[Definition~5.1]{fiore2018relative} (and subsequently simplified in \cite[Proposition~5.4]{arkor2025bicategories}), viewing a relative monad as a pseudomonad relative to a 2-functor between locally discrete 2-categories~\cite[Theorem~5.3(i $\iff$ ii)]{fiore2018relative}.
\footnotetext{At least after noting that \citeauthor{diers1975jmonades}'s \emph{$j$-monads} are equivalent to $j$-relative monads~\cite[Example~8.14]{arkor2024formal}.}

We note in passing that an idempotent relative monad admits a simpler presentation than an arbitrary relative monad due to uniqueness of the extensions.

\begin{proposition}
    \label{presentation-of-idempotent-relative-monad}
    An idempotent $j$-relative monad is uniquely determined by the following data.
    \begin{enumerate}
        \item For each object $a \in A$, an object $ta \in E$ and a morphism $\eta_a \colon ja \to ta$.
        \item For each morphism $f \colon ja \to tb$ in $E$, a unique morphism $f^\dag \colon ta \to tb$ such that $f^\dag \eta_a = f$.
    \end{enumerate}
\end{proposition}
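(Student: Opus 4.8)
The plan is to exhibit the passage from an idempotent $j$-relative monad to the data (1)--(2) as a bijection; both the existence of the reconstruction and the uniqueness asserted in the statement then follow at once. Given an idempotent $j$-relative monad $(t, \dag, \eta)$, the data (1) is just its objects $ta$ and unit components $\eta_a$, and for (2) one must check that, for each $f \colon ja \to tb$, the extension $f^\dag$ is the \emph{unique} morphism $g \colon ta \to tb$ with $g\eta_a = f$. Existence is the relative monad law $f^\dag\eta_a = f$; uniqueness is exactly the $\eta$-orthogonality of $tb$, which is condition~(5) of \cref{idempotent-relative-monad-characterisations} and hence holds because $\dag$ is invertible. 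So the data (1)--(2) is well-defined and evidently recovers $\eta$ and $\dag$.

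Conversely, suppose we are handed data (1)--(2): objects $ta \in E$ and morphisms $\eta_a \colon ja \to ta$ such that every $f \colon ja \to tb$ admits a unique extension along $\eta_a$, which we name $f^\dag \colon ta \to tb$. I claim $(t, \dag, \eta)$ is then a $j$-relative monad, and moreover idempotent. Law~(3), $f^\dag\eta_a = f$, holds by construction. For law~(4): $1_{ta}\eta_a = \eta_a$, so by uniqueness $(\eta_a)^\dag = 1_{ta}$. For law~(5): given $f \colon ja \to tb$ and $g \colon jb \to tc$, the composite $g^\dag f^\dag \colon ta \to tc$ satisfies $(g^\dag f^\dag)\eta_a = g^\dag(f^\dag\eta_a) = g^\dag f$ by law~(3), so by uniqueness of the extension of $g^\dag f$ along $\eta_a$ we conclude $(g^\dag f)^\dag = g^\dag f^\dag$. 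Hence $(t, \dag, \eta)$ is a $j$-relative monad in the sense of the Definition (its functoriality, and the formula $tf = (\eta_b \c jf)^\dag$ on morphisms, being then automatic by \cite[Theorem~8.12]{arkor2024formal}). Idempotence follows since our hypothesis is precisely condition~(5) of \cref{idempotent-relative-monad-characterisations}; alternatively one sees it directly, as law~(3) says $E(\eta, t) \c \dag = 1$ and the uniqueness clause says $\dag \c E(\eta, t) = 1$, so $\dag$ is invertible.

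Finally, the two passages are mutually inverse: in each direction the extension operator is forced to be ``the unique extension along $\eta$'', so composing them in either order yields the identity. This identifies idempotent $j$-relative monads with the data (1)--(2), which is the assertion. I do not expect a genuine obstacle here; the argument is purely the interplay of the three relative monad laws with the uniqueness clause, and the only point worth flagging is that no separate choice of $t$ on morphisms need be supplied, as it is already determined by the above.
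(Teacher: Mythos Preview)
Your proof is correct and takes essentially the same approach as the paper: use \cref{idempotent-relative-monad-characterisations} for the forward direction, and derive the remaining unit and associativity laws from the uniqueness clause for the converse. You have simply spelled out in full the verifications that the paper leaves implicit.
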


\begin{proof}
    By \cref{idempotent-relative-monad-characterisations}, every idempotent $j$-relative monad satisfies the second condition. Conversely, the remaining unit law and the associativity law for a relative monad follow from uniqueness of the extensions.
\end{proof}

To justify \cref{idempotent-relative-monad}, we shall show that it coincides with the notion of idempotence for non-relative monads when $j$ is the identity functor (\cf{} the analogous statement for lax-idempotent relative pseudomonads~\cite[Remark~5.5]{fiore2018relative}). First, it will be useful to recall several equivalent characterisations of idempotent monads.

\begin{lemma}
    \label{idempotent-monad-characterisations}
    The following are equivalent for a monad $T = (t, \mu, \eta)$ on a category $E$.
    \begin{enumerate}
        \item $\mu \colon tt \tto t$ is invertible.
        \item \label{t-eta-condition} $t\eta = \eta t \colon t \tto tt$.
        \item \label{algebras-are-fixed-points} Every $T$-algebra $(e, \epsilon \colon te \to e)$ is a fixed point, \ie{} $\epsilon$ is invertible.
        \item \label{free-algebras-are-fixed-points} Every free $T$-algebra is a fixed point.
        \item \label{forgetful-functor-is-ff} The forgetful functor $\Alg(T) \to E$ is full, hence \ff{}.
    \end{enumerate}
\end{lemma}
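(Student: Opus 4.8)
The plan is to prove the five conditions equivalent via a cycle of implications, using throughout only the naturality of the unit $\eta$ and the two monad unit laws $\mu\c t\eta = 1_t = \mu\c\eta t$. Since $U\colon\Alg(T)\to E$ is faithful (an algebra morphism is determined by its underlying morphism), condition~(5) amounts to the assertion that $U$ is full; thus it suffices to establish (1) $\Rightarrow$ (2) $\Rightarrow$ (3) $\Rightarrow$ (4) $\Rightarrow$ (1) together with (2) $\Rightarrow$ (5) $\Rightarrow$ (1).

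For (1) $\Leftrightarrow$ (2): if $\mu$ is invertible, then $\mu\c t\eta = 1_t = \mu\c\eta t$ forces $t\eta = \mu^{-1} = \eta t$. Conversely, assuming $t\eta = \eta t$, I claim $\mu^{-1} = \eta t$: one composite is the unit law $\mu\c\eta t = 1_t$, and for the other, naturality of $\eta$ at $\mu_e\colon tte\to te$ gives $\eta_{te}\c\mu_e = t\mu_e\c\eta_{tte}$, and rewriting $\eta_{tte} = (\eta t)_{te} = (t\eta)_{te} = t\eta_{te}$ by hypothesis turns the right-hand side into $t(\mu_e\c\eta_{te}) = t(1_{te}) = 1_{tte}$, again by the unit law.

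For (2) $\Rightarrow$ (3): given a $T$-algebra $(e,\epsilon)$, the algebra unit law gives $\epsilon\c\eta_e = 1_e$, while naturality of $\eta$ at $\epsilon\colon te\to e$ combined with (2) gives $\eta_e\c\epsilon = t\epsilon\c\eta_{te} = t\epsilon\c t\eta_e = t(\epsilon\c\eta_e) = 1_{te}$, so $\epsilon$ is invertible. The implication (3) $\Rightarrow$ (4) is immediate, as free algebras are algebras, and (4) $\Rightarrow$ (1) holds because the free algebra on $e$ is $(te,\mu_e)$, so if every free algebra is a fixed point then every component $\mu_e$ is invertible, whence $\mu$ is invertible by naturality.

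Finally I tie in~(5). For (2) $\Rightarrow$ (5): given algebras $(d,\delta),(e,\epsilon)$ and an arbitrary morphism $g\colon d\to e$ of $E$, I must check that $\epsilon\c tg = g\c\delta$; using $\eta_d\c\delta = 1_{td}$ (the computation in the previous paragraph, applied to $(d,\delta)$), the naturality square $\eta_e\c g = tg\c\eta_d$, and the algebra unit law $\epsilon\c\eta_e = 1_e$, one gets $\epsilon\c tg = \epsilon\c tg\c\eta_d\c\delta = \epsilon\c\eta_e\c g\c\delta = g\c\delta$, so $g$ underlies an algebra morphism and $U$ is full. For (5) $\Rightarrow$ (1): the morphism $\eta_{te}\colon te\to tte$ of $E$ is a map between the underlying objects of the free algebras $(te,\mu_e)$ and $(tte,\mu_{te})$, so by fullness it satisfies the algebra-morphism equation between them; unwinding that equation, and using the unit law $\mu_{te}\c t\eta_{te} = 1_{tte}$ for the right-hand side, yields $\eta_{te}\c\mu_e = 1_{tte}$, which together with $\mu_e\c\eta_{te} = 1_{te}$ makes each $\mu_e$, hence $\mu$, invertible. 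The result is classical and I anticipate no real obstacle; the one step requiring an idea rather than a diagram chase is recognising in (5) $\Rightarrow$ (1) that $\eta_{te}$ should be read as a morphism between two \emph{free} algebras, and the only thing to watch elsewhere is the order of composition.
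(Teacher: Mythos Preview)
Your proof is correct in every step; each implication is carefully argued and the cycle closes. The paper, by contrast, does not give a proof at all: it simply cites \textcite[Propositions~6.1--6.3]{appelgate1969categories} for (1) $\iff$ (2) $\iff$ (3) $\iff$ (5) and observes that (1) $\iff$ (4) is trivial because the structure map of a free algebra is $\mu$. So the difference is not one of mathematical strategy but of presentation: you spell out the classical argument that the paper delegates to the literature. What your write-up buys is self-containedness and a concrete reminder of which monad axioms are actually used (only the unit laws and naturality of $\eta$, never associativity of $\mu$); what the paper's citation buys is brevity for a result it treats as folklore background.
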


\begin{proof}
    (1) $\iff$ (2) $\iff$ (3) $\iff$ (5) are \cite[Propositions~6.1 -- 6.3]{appelgate1969categories}. (1) $\iff$ (4) is trivial, since the algebra structure of a free algebra is given by $\mu$.
\end{proof}

\begin{proposition}
    \label{monad-idempotent-iff-relative-monad-idempotent}
    A monad $T = (t, \mu, \eta)$ on $E$ is idempotent if and only if it is idempotent as a $1_E$-relative monad.
\end{proposition}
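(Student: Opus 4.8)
The plan is to unwind both notions of idempotence into explicit equations and match them up via \cref{idempotent-monad-characterisations}. Recall that under the bijection recalled above, the $1_E$-relative monad associated with $(t,\mu,\eta)$ has extension operator $f^\dag = \mu_b\c tf$ for $f\colon e\to tb$. Combining the explicit form of condition~(4) of \cref{idempotent-relative-monad-characterisations} with the relative-monad law $f^\dag\eta_e = f$ (which always holds), this relative monad is idempotent if and only if $(g\eta_e)^\dag = g$, \ie $\mu_b\c tg\c t\eta_e = g$, for every morphism $g\colon te\to tb$. On the other hand, \cref{idempotent-monad-characterisations} tells us that $T$ is idempotent precisely when $t\eta = \eta t$. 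Thus it suffices to prove that $t\eta = \eta t$ holds if and only if $\mu_b\c tg\c t\eta_e = g$ for all $g\colon te\to tb$.

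For the forward implication I would simply compute, assuming $t\eta_e = \eta_{te}$, that
\[\mu_b\c tg\c t\eta_e = \mu_b\c tg\c\eta_{te} = \mu_b\c\eta_{tb}\c g = g,\]
using naturality of $\eta$ for the middle equality and the unit law $\mu\c\eta t = 1_t$ for the last. For the converse I would instantiate the hypothesis at the test morphism $g \defeq \eta_{te}\colon te\to t(te)$, taking $b\defeq te$, obtaining $\mu_{te}\c t(\eta_{te})\c t\eta_e = \eta_{te}$; the unit law $\mu\c t\eta = 1_t$ gives $\mu_{te}\c t(\eta_{te}) = 1_{t(te)}$, so this reduces to $t\eta_e = \eta_{te}$, and since $e$ is arbitrary we conclude $t\eta = \eta t$.

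There is no real obstacle here: the whole argument is a short manipulation of the monad axioms together with the translation formula. The one point calling for a moment's thought is the choice of test morphism $g = \eta_{te}$ in the converse direction, which is exactly what extracts the full identity $t\eta = \eta t$; instantiating at an identity morphism instead would only yield a weaker consequence of idempotence.
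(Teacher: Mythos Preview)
Your proof is correct and follows essentially the same approach as the paper's. The only cosmetic difference is in the converse direction: the paper packages the computation as $t\eta_e = (\eta_{te}\,\eta_e)^\dag = \eta_{te}$, invoking the relative-monad description of the functorial action of $t$ and then idempotence, whereas you unfold $\dag$ explicitly into $\mu\c t(-)$ and simplify using the monad unit law---but this is the same instantiation at $g=\eta_{te}$ and the same underlying calculation.
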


\begin{proof}
    Suppose that $T$ is idempotent as a non-relative monad, and let $(e, \epsilon)$ be an algebra. Let $f \colon tx \to e$ be a morphism. Then $(f \eta_x)^\dag = \epsilon \c t(f \eta_x) = \epsilon \c tf \c t\eta_x = \epsilon \c tf \c \eta_{tx} = \epsilon \c \eta_{ty} \c f = f$ using the definition of $\dag$ in terms of $\epsilon$, functoriality of $t$, idempotence, naturality of $\eta$, and the unit law. Hence $T$ is idempotent as a $1_E$-relative monad.

    Conversely, suppose that $T$ is idempotent as a $1_E$-relative monad. For all objects $e$, we have $t \eta_e = (\eta_{te} \eta_e)^\dag = \eta_{te}$ using first the definition of the functoriality of $t$ in terms of $\dag$ and then idempotence. Thus, by \cref{idempotent-monad-characterisations}, $T$ is idempotent as a non-relative monad.
\end{proof}

One might then hope that each of the equivalent characterisations of \cref{idempotent-monad-characterisations} extend to analogous characterisations of idempotent relative monads (with the exception of \cref{t-eta-condition}, which fundamentally involves iteration of the functor $t$, and so does not have an evident generalisation to relative monads). Unfortunately, this is not the case: we shall give a counterexample in \cref{idempotence-counterexample}. This motivates the study of a stronger notion of idempotence for relative monads.

\section{Algebraically idempotent relative monads}

We begin by recalling the concept of an algebra for a relative monad, before introducing the concept of idempotence therefor.

\begin{definition}[{\cite[Definition~2.11]{altenkirch2015monads}}]
    An \emph{algebra} for a $j$-relative monad $T$ (or simply \emph{$T$-algebra} comprises the following data.
    \begin{enumerate}
        \item An object $e \in E$.
        \item For each morphism $f \colon ja \to e$ in $E$, a morphism $f^\rtimes \colon ta \to e$.
    \end{enumerate}
    The following laws must be satisfied.
    \begin{enumerate}[resume]
        \item $f^\rtimes \eta_a = f$, for each morphism $f \colon ja \to e$ in $E$.
        \item $(g^\rtimes f)^\rtimes = g^\rtimes f^\dag$, for each pair of morphisms $f \colon ja \to tb$ and $g \colon jb \to e$.
    \end{enumerate}
    A \emph{morphism} of $T$-algebras from $(e, \rtimes)$ to $(e', \rtimes')$ is a morphism $\epsilon \colon e \to e'$ such that $\epsilon \c f^\rtimes = (\epsilon f)^\rtimes$, for each morphism $f \colon ja \to e$ in $E$. $T$-algebras and their morphisms form a category $\Alg(T)$.
\end{definition}

\begin{example}
    For every $j$-relative monad $T = (t, \dag, \eta)$ and object $a \in A$, the pair $(ta, \dag)$ forms a $T$-algebra, called the \emph{free $T$-algebra}.
\end{example}

Several of the equivalent characterisations of idempotent monads involve properties of their algebras. This motivates the following definition.

\begin{definition}
    Let $T$ be a $j$-relative monad. A $T$-algebra $(e, \rtimes)$ is \emph{idempotent} if its extension operator $\rtimes \colon E(j, e) \tto E(t, e)$ is invertible. Denote by $\Alg\idem(T)$ the full subcategory of $\Alg(T)$ spanned by the idempotent algebras.
\end{definition}

As with idempotent relative monads, there are several equivalent formulations of idempotence for algebras.

\begin{lemma}
    \label{idempotent-algebra-characterisations}
    The following are equivalent.
    \begin{enumerate}
        \item $\rtimes$ is invertible.
        \item $\rtimes$ is an epimorphism.
        \item $\rtimes$ is right-inverse to $E(\eta, e)$.
        \item $\rtimes$ is inverse to $E(\eta, e)$.
        \item $e$ is $\eta$-orthogonal, \ie{} every morphism $f \colon ja \to e$ admits a unique extension along~$\eta_a$.
        \[\begin{tikzcd}
        	ta \\
        	ja & e
        	\arrow["{\exists!}", dashed, from=1-1, to=2-2]
        	\arrow["{\eta_a}", from=2-1, to=1-1]
        	\arrow["f"', from=2-1, to=2-2]
        \end{tikzcd}\]
        \item[(1')] $E(\eta, e)$ is invertible.
        \item[(2')] $E(\eta, e)$ is a monomorphism.
        \item[(3')] $E(\eta, e)$ is left-inverse to $\rtimes$.
    \end{enumerate}
\end{lemma}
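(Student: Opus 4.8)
The plan is to reduce the whole list to the single identity supplied by the algebra unit law. Abbreviate $r \defeq {\rtimes}$ and $s \defeq E(\eta, e)$ for the natural transformations $E(j, e) \tto E(t, e)$ and $E(t, e) \tto E(j, e)$ of functors $A\op \to \Set$ — that $\rtimes$ is natural follows from the algebra laws, just as the extension operator of a relative monad is natural, and $E(\eta, e)$ is natural because $\eta$ is — and note that the unit law $f^\rtimes \eta_a = f$ says precisely that $sr = 1_{E(j,e)}$. Thus $r$ is always a section of $s$ and $s$ always a retraction of $r$; in particular $r$ is a split monomorphism and $s$ a split epimorphism. Everything else follows by manipulations valid in an arbitrary category, together with the fact that a natural transformation between functors into $\Set$ is invertible exactly when each of its components is.

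First I would dispatch the cluster $(1) \iff (1') \iff (3) \iff (3') \iff (4)$. Given $sr = 1$: if $r$ is invertible then $s = (sr)r\inv = r\inv$, so $s$ is invertible and $rs = rr\inv = 1$; symmetrically, if $s$ is invertible then $r = s\inv(sr) = s\inv$; and $rs = 1$ together with the unconditionally valid $sr = 1$ makes $r$ and $s$ mutually inverse. Since $sr = 1$ always holds, each of $(3)$, $(3')$ and $(4)$ amounts precisely to the identity $rs = 1$, so all five of these conditions coincide with invertibility of $\rtimes$.

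Next, $(1) \iff (2)$: invertibility gives epicness trivially, and conversely, from $sr = 1$ we get $(rs)r = r(sr) = r$, whence $rs = 1$ on cancelling the epimorphism $r$, so that $\rtimes$ is invertible by the previous step. Dually, $(1') \iff (2')$: from $sr = 1$ we get $s(rs) = (sr)s = s$, whence $rs = 1$ on cancelling the monomorphism $s$. Finally, $(1') \iff (5)$: $E(\eta, e)$ is invertible exactly when each component $E(\eta_a, e) \colon E(ta, e) \to E(ja, e)$ is a bijection, that is, when every $f \colon ja \to e$ factors uniquely through $\eta_a$ — which is precisely the assertion that $e$ is $\eta$-orthogonal.

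I do not expect any genuine obstacle: essentially the whole content is the opening observation that the unit law already exhibits $E(\eta, e)$ as a one-sided inverse of $\rtimes$, so that invertibility of $\rtimes$ is automatically two-sided and the conditions on $\rtimes$ collapse onto the corresponding ones on $E(\eta, e)$; the rest is bookkeeping. The one place to be slightly careful is that \emph{epimorphism} in $(2)$ and \emph{monomorphism} in $(2')$ are to be read in the functor category $[A\op, \Set]$, equivalently componentwise. Finally, applying this lemma to each free algebra $(tb, {\dag})$ — whose extension operator is $\dag$ with second argument fixed to $b$, and whose associated precomposition map is $E(\eta, t)$ with second argument fixed — and using that a natural transformation between functors on $A\op \times A$ is invertible (resp.\ epic, monic) iff it is so for every fixed value of the second variable, simultaneously establishes \cref{idempotent-relative-monad-characterisations}, as promised there.
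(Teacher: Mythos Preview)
Your proof is correct and follows essentially the same approach as the paper: the key observation is that the algebra unit law already gives $E(\eta,e)\circ{\rtimes}=1$, making $\rtimes$ a split monomorphism (and $E(\eta,e)$ a split epimorphism), after which all the listed conditions collapse by elementary manipulations. The paper is terser and links (5) to (4) rather than to (1'), but this is a cosmetic difference; your closing remark deriving \cref{idempotent-relative-monad-characterisations} by specialising to free algebras is also exactly what the paper intends.
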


\begin{proof}
    The first axiom for a $T$-algebra asserts that $E(\eta, e) \c \rtimes = 1$, so that $\rtimes$ is a split monomorphism, from which the equivalence of (1 -- 4) follows. (1' -- 3') follow from (4) by duality. For (4) $\iff$ (5), existence is given by $f^\rtimes$; for uniqueness, suppose there is any morphism $f' \colon ta \to e$ such that $f' \eta_a = f$. Then $f' = (f' \eta_a)^\rtimes = f^\rtimes$.
\end{proof}

Observe that this implies that, if an object $e$ admits an idempotent algebra structure, that algebra structure is unique.

As with idempotent relative monads, idempotent algebras admit a simpler presentation than arbitrary algebras.

\begin{proposition}
    \label{presentation-of-idempotent-algebra}
    Let $T = (t, \dag, \eta)$ be a $j$-relative monad. An idempotent $T$-algebra is uniquely determined by the following data.
    \begin{enumerate}
        \item An object $e \in E$.
        \item For each morphism $f \colon ja \to e$ in $E$, a unique morphism $f^\rtimes \colon ta \to e$ such that $f^\rtimes \eta_a = f$.
    \end{enumerate}
\end{proposition}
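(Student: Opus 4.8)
The plan is to follow the template of the proof of \cref{presentation-of-idempotent-relative-monad}, using \cref{idempotent-algebra-characterisations} to translate between the two presentations. There are three things to establish: that every idempotent $T$-algebra arises from data of the stated form; that such data determines the algebra; and that every such choice of data does in fact assemble into an (automatically idempotent) $T$-algebra.

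First I would show that an idempotent $T$-algebra $(e, \rtimes)$ yields the data of item~(2). By \cref{idempotent-algebra-characterisations}, the object $e$ is $\eta$-orthogonal, so each $f \colon ja \to e$ admits a \emph{unique} morphism $ta \to e$ restricting along $\eta_a$ to $f$; and since the first $T$-algebra axiom says precisely that $f^\rtimes \eta_a = f$, this unique extension must be $f^\rtimes$. Hence both the carrier and the operator $\rtimes$ are recovered from $e$ together with the existence-and-uniqueness property, which gives the ``uniquely determined'' clause.

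Conversely, I would verify that the data of (1)--(2) assembles into a $T$-algebra. Take $f^\rtimes$ to be the asserted unique extension. The first $T$-algebra axiom holds by construction. For the second, namely $(g^\rtimes f)^\rtimes = g^\rtimes f^\dag$ for $f \colon ja \to tb$ and $g \colon jb \to e$: both sides are morphisms $ta \to e$, so by uniqueness it is enough to check that both precompose with $\eta_a$ to $g^\rtimes f$. The left-hand side does so by the defining property of $(-)^\rtimes$; the right-hand side does so because $f^\dag \eta_a = f$ by the first relative-monad axiom, whence $g^\rtimes f^\dag \eta_a = g^\rtimes f$. Idempotence of the resulting algebra is then immediate from \cref{idempotent-algebra-characterisations}, since condition~(5) holds by hypothesis.

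I do not expect any real obstacle: the only point requiring a little care is recognising that the single ``unique morphism'' clause in item~(2) does double duty, simultaneously supplying the extension operator and forcing its associativity axiom — but this is exactly parallel to the non-relative case, and indeed to \cref{presentation-of-idempotent-relative-monad}.
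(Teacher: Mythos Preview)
Your proposal is correct and follows exactly the approach of the paper's proof, which simply cites \cref{idempotent-algebra-characterisations} for the forward direction and asserts that ``the compatibility law for an algebra follows from uniqueness of the extensions'' for the converse. You have merely unpacked that second clause explicitly (checking that both sides of the associativity axiom restrict along $\eta_a$ to the same map), which is precisely what the paper leaves implicit.
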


\begin{proof}
    By \cref{idempotent-algebra-characterisations}, every idempotent $j$-relative monad satisfies the second condition. Conversely, the compatibility law for an algebra follow from uniqueness of the extensions.
\end{proof}

Consequently, we may see that an idempotent relative monad satisfies an analogue of \cref{free-algebras-are-fixed-points}.

\begin{lemma}
    \label{idempotent-iff-free-algebras-are-idempotent}
    A $j$-relative monad $T$ is idempotent if and only if every free $T$-algebra is idempotent.
\end{lemma}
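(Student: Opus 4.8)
The plan is to observe that idempotence of $T$ and idempotence of all its free algebras are, after unwinding the definitions, literally the same condition, stated globally in the first case and fibrewise in the second. Recall that the free $T$-algebra on $a \in A$ is $(ta, \dag)$: its extension operator sends a morphism $f \colon jb \to ta$ to $f^\dag \colon tb \to ta$, so it is precisely the natural transformation $E(j\ph, ta) \tto E(t\ph, ta) \colon A\op \to \Set$ whose component at $b \in A$ is the function $\dag_{b,a} \colon E(jb, ta) \to E(tb, ta)$. Hence the free algebra on $a$ is idempotent iff $\dag_{b,a}$ is a bijection for every $b \in A$, whereas $T$ is idempotent iff the natural transformation $\dag \colon E(j, t) \tto E(t, t) \colon A\op \times A \to \Set$ is invertible, i.e.\ iff $\dag_{b,a}$ is a bijection for every pair $(b, a)$.

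Given this identification, the proof is a one-line manipulation of quantifiers, using only the standard fact that a natural transformation of $\Set$-valued functors is invertible exactly when each of its components is a bijection: $T$ is idempotent $\iff$ $\dag_{b,a}$ is a bijection for all $a, b \in A$ $\iff$ for every $a \in A$ the transformation with components $\dag_{b,a}$ ($b \in A$) is invertible $\iff$ every free $T$-algebra is idempotent. So I would first record the description of the free algebra's extension operator as a restriction of $\dag$, then conclude by this chain of equivalences.

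I do not expect any genuine obstacle: the only content is the identification of the free algebra's extension operator with the partially applied extension operator of $T$, together with the fibrewise reformulation of invertibility, both of which are routine. As an alternative route one could instead invoke the orthogonality characterisations already established: by condition (5) of \cref{idempotent-relative-monad-characterisations}, $T$ is idempotent iff $tb$ is $\eta$-orthogonal for every $b \in A$, and by condition (5) of \cref{idempotent-algebra-characterisations}, the free algebra $(ta, \dag)$ is idempotent iff $ta$ is $\eta$-orthogonal; these two families of conditions manifestly coincide.
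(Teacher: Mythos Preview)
Your proposal is correct and matches the paper's proof essentially verbatim: the paper also identifies the extension operator of the free algebra on $a$ as $\dag_{{-}, a}$ and then reduces both notions of idempotence to componentwise invertibility of $\dag$. Your alternative via the orthogonality characterisations is a harmless rephrasing of the same observation.
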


\begin{proof}
    Observe that, for each object $a \in A$, the free $T$-algebra on $a$ is equipped with the extension operator $\dag_{{-}, a} \colon E(j, ta) \tto E(t, ta)$. Hence $T$ is idempotent if and only if $\dag$ is invertible if and only if, for all $a \in A$, $\dag_{{-}, a}$ is invertible, if and only if every free $T$-algebra is idempotent.
\end{proof}

We are particularly interested in analogues of \cref{algebras-are-fixed-points,forgetful-functor-is-ff}. This motivates the introduction of the following, stronger notion of idempotence for relative monads.

\begin{definition}
    A $j$-relative monad $T$ is \emph{algebraically idempotent} if every $T$-algebra is idempotent.
\end{definition}

\begin{example}
    Every functor $j \colon A \to E$, viewed as a trivial $j$-relative monad is algebraically idempotent: algebras for trivial relative monads are in bijection with objects of the codomain~\cite[Proposition~6.42]{arkor2024formal}, whose extension operator $\rtimes$ is the identity.
\end{example}

By \cref{idempotent-iff-free-algebras-are-idempotent}, every algebraically idempotent relative monad is idempotent. However, the converse is not true.

\begin{example}[Idempotence $\centernot\implies$ algebraic idempotence]
    \label{idempotence-counterexample}
    Let $E$ be the free category on the graph
    \[\begin{tikzcd}
    	j & t & e
    	\arrow["\eta", from=1-1, to=1-2]
    	\arrow["{f'}"', curve={height=6pt}, from=1-2, to=1-3]
    	\arrow["f", curve={height=-6pt}, from=1-2, to=1-3]
    \end{tikzcd}\]
    modulo $f \eta = f' \eta$. We can view $j$ and $t$ as functors $1 \to E$ from the terminal category. The functor $t$ may then be equipped with a relative monad structure $T$: the unit is $\eta$, and the extension operator sends the morphism $\eta$ to the identity on $t$: this is trivially a bijection because $t$ has no nontrivial endomorphisms. Thus $T$ is idempotent. However, $e$ has two distinct algebra structures: one sends the unique morphism $j \to e$ to $f$, and the other sends the unique morphism to $f'$. Thus $e$ admits a non-idempotent algebra structure.
\end{example}

\begin{remark}
    We expect that idempotence does not imply algebraic idempotence even when the root $j$ is dense, though we do not have a counterexample. (\textcite[\S4]{diers1975jmonades} claims that idempotence is weaker than algebraic idempotence even in the presence of dense roots, but without giving a counterexample.)
\end{remark}

For non-relative monads, the two notions of idempotence coincide, as suggested by \cref{idempotent-monad-characterisations}.

\begin{lemma}
    \label{idempotent-monad-is-algebraically-idempotent}
    Every idempotent (non-relative) monad is algebraically idempotent.
\end{lemma}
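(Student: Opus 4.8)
The plan is to reduce the statement to the calculation already carried out in the proof of \cref{monad-idempotent-iff-relative-monad-idempotent}. Fix an idempotent monad $T = (t, \mu, \eta)$ on $E$, regarded as a $1_E$-relative monad. Under the correspondence between classical $T$-algebras and algebras for $T$ as a relative monad, a classical algebra $(e, \epsilon \colon te \to e)$ has extension operator $f^\rtimes = \epsilon \c tf$ for each $f \colon a \to e$; so I would take an arbitrary such algebra and show that it is idempotent, \ie{} that $\rtimes$ is invertible.

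By \cref{idempotent-algebra-characterisations}, it suffices to show that $\rtimes$ is inverse to $E(\eta, e)$. One of the two required identities, $E(\eta, e) \c \rtimes = 1$, is precisely the first $T$-algebra axiom; what remains is to verify that $(g\eta_a)^\rtimes = g$ for every morphism $g \colon ta \to e$. This is the short computation
\[
(g\eta_a)^\rtimes = \epsilon \c t(g\eta_a) = \epsilon \c tg \c t\eta_a = \epsilon \c tg \c \eta_{ta} = \epsilon \c \eta_e \c g = g,
\]
using functoriality of $t$, idempotence of $T$ in the form $t\eta_a = \eta_{ta}$ (\cref{t-eta-condition}), naturality of $\eta$, and the unit law for $(e, \epsilon)$ — exactly the computation in the proof of \cref{monad-idempotent-iff-relative-monad-idempotent}, now applied to an arbitrary algebra rather than only to the free ones. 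Hence every $T$-algebra is idempotent, so $T$ is algebraically idempotent.

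I expect no genuine obstacle; the only point that wants a little care is the identification of the two notions of $T$-algebra and the resulting form $f^\rtimes = \epsilon \c tf$ of the extension operator. Should one prefer to sidestep that identification, one can argue entirely within the relative-monad formalism: given an algebra $(e, \rtimes)$ for the $1_E$-relative monad $T$, set $\epsilon \defeq (1_e)^\rtimes \colon te \to e$; the first algebra axiom gives $\epsilon \c \eta_e = 1_e$, and the second algebra axiom applied to $1_e \colon e \to e$ and $\eta_e \c f \colon a \to te$, combined with the monad unit law $\mu_e \c t\eta_e = 1_{te}$, gives $f^\rtimes = \epsilon \c tf$ for all $f \colon a \to e$, after which the displayed calculation applies unchanged.
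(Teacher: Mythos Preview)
Your proof is correct and follows essentially the same approach as the paper: both reduce to the identical calculation $(g\eta_a)^\rtimes = \epsilon \c t(g\eta_a) = \epsilon \c tg \c t\eta_a = \epsilon \c tg \c \eta_{ta} = \epsilon \c \eta_e \c g = g$, invoking functoriality of $t$, the equality $t\eta = \eta t$ from \cref{t-eta-condition}, naturality of $\eta$, and the unit law. Your version is a bit more careful than the paper's in spelling out the passage between classical $T$-algebras and $1_E$-relative-monad algebras (the paper silently identifies the two and writes $\epsilon$ without comment), and your final paragraph giving a purely relative-monad derivation of $f^\rtimes = \epsilon \c tf$ is a nice addition, but the core argument is the same.
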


\begin{proof}
    Let $(t, \mu, \eta)$ be an idempotent monad and let $(e, \rtimes)$ be an algebra. Let $f \colon tx \to e$ be a morphism. Then $(f \eta_x)^\rtimes = \epsilon \c t(f \eta_x) = \epsilon \c tf \c t\eta_x = \epsilon \c tf \c \eta_{tx} = \epsilon \c \eta_{ty} \c f = f$ using the definition of $\rtimes$ in terms of $\epsilon$, functoriality of $t$, idempotence, naturality of $\eta$, and the unit law.
\end{proof}

\begin{remark}
    It is important to note that, while it is true that a non-relative monad is idempotent if and only if its multiplication $\mu$ is invertible if and only if its extension operator $\dag$ is invertible, the analogous statement is not generally true for algebras. That is, it is not generally true that an algebra is a fixed point if and only if it is idempotent.
    
    For instance, consider the monad $\ph + 1$ on the category of sets. An algebra for $\ph + 1$ is a pointed set, \ie{} a set equipped with a specified element. A pointed set $(Y, y)$ is idempotent as an algebra if and only if, for any set $X$ and function $[f, y'] \colon X + 1 \to Y$, we have $[f, y'] = [f, y]$, hence if and only if $Y$ is a singleton. Conversely, $\ph + 1$ has no fixed points, since the coprojection $\copi_1 \colon Y \to Y + 1$ is not surjective, hence never an isomorphism. In fact, this shows that it is not even true that a free algebra is idempotent if and only if it is a fixed point, because singletons are free for $\ph + 1$ on $\varnothing$. However, \cref{monad-idempotent-iff-relative-monad-idempotent,idempotent-monad-is-algebraically-idempotent,idempotent-monad-characterisations} implies that if \emph{every} free algebra is idempotent, then idempotence is equivalent to being a fixed point.

    More generally, for a monad $T$ on a category with a terminal object, $1$ is always an idempotent algebra, but is a fixed point if and only if $t$ preserves the terminal object.

    The relationship between idempotent algebras and fixed points is studied in the greater generality of relative pseudomonads in \cite[\S5.2]{arkor2025bicategories}.
\end{remark}

Following the previous remark, there is a bifurcation of concepts in the relative setting: the weaker \emph{idempotence}, and the stronger \emph{algebraic idempotence}. Our thesis is that it is algebraic idempotence that is the appropriate generalisation. We support this perspective by showing in the next section that algebraic idempotence recovers the final equivalent characterisation of idempotent monads in \cref{idempotent-monad-characterisations} (and hence, by \cref{idempotence-counterexample}, that idempotence alone does not). This suggests it may be appropriate to use the term \emph{idempotence} for algebraic idempotence, and refer to the weaker notion as \emph{non-algebraic idempotence} (we have not done so herein purely for consistency with previous literature).

\begin{remark}
    By \cite[Corollary~6.40]{arkor2024formal}, every $j$-relative monad morphism $\tau \colon T \to T'$ induces a functor $\Alg(\tau) \colon \Alg(T') \to \Alg(T)$ over $E$ (moreover, such functors are in bijection with relative monad morphisms). However, note that this functor generally does not restrict to a functor $\Alg\idem(T') \to \Alg\idem(T)$ between categories of idempotent algebras unless $\tau$ is invertible.
\end{remark}

\section{(Algebraically) idempotent relative adjunctions}

Relative adjunctions, which were introduced by \textcite{ulmer1968properties}, are to relative monads what adjunctions are to monads.

\begin{definition}
    A \emph{$j$-relative adjunction} comprises functors $\ell \colon A \to C$ and $r \colon C \to E$ together with a bijection
    \[C(\ell a, c) \iso E(j a, r c)\]
    natural in $a \in A$ and $c \in C$. We denote this situation by $\ell \radj j r$.
\end{definition}

\begin{example}
    \label{algebra-resolution}
    For each $j$-relative monad $T$, the functor $f_T \colon A \to \Alg(T)$ sending each object $a \in A$ to the free $T$-algebra $(ta, \dag)$ is left-adjoint to the forgetful functor $u_T \colon \Alg(T) \to E$, relative to $j$.
\end{example}

\begin{example}
    \label{opalgebra-resolution}
    For every $j$-relative monad $T$, we can form the \emph{Kleisli category} $\Kl(T)$ whose objects are the same as those of $A$, and for which a morphism from $a$ to $b$ is a morphism $ja \to tb$ in $E$. The \ioo{} functor $k_T \colon A \to \Kl(T)$ sending a morphism $f \colon a \to b$ in $A$ to a morphism $\eta_b \c jf \colon ja \to jb \to tb$ is left-adjoint, relative to $j$, to the functor $v_T \colon \Kl(T) \to E$ sending an object $a \in A$ to the object $ta$.
\end{example}

\begin{example}
    Every $j$-relative adjunction induces a $j$-relative monad structure on the composite $r \ell$ by defining the unit as the image of $1_\ell$ under $C(\ell, \ell) \iso E(j, r\ell)$, and the extension operator as $E(j, r\ell) \iso C(\ell, \ell) \tto E(r\ell, r\ell)$~\cite[Theorem~5.24]{arkor2024formal}. We say that a $j$-relative adjunction is a \emph{resolution} of a $j$-relative monad $T$ if it induces $T$ in this way~\cite[Definition~5.25]{arkor2024formal}; a morphism of resolutions is a functor $c \colon C \to C'$ between the apices of the relative adjunctions commuting with the left and right adjoints. In particular, the $j$-relative adjunctions $f_T \radj j u_T$ and $k_T \radj j v_T$ of \cref{algebra-resolution,opalgebra-resolution} are resolutions of $T$ (in fact, they are the initial and terminal resolutions respectively~\cite[Corollary~6.41 \& Corollary~6.51]{arkor2024formal}).
\end{example}

An adjunction is \emph{idempotent} if it induces an idempotent monad~\cite[10]{macdonald1982tower}. We introduce the analogous concepts for relative adjunctions.

\begin{definition}
    A relative adjunction is \emph{(algebraically) idempotent} if it induces an (algebraically) idempotent relative monad.
\end{definition}

We shall not attempt to give a full characterisation of idempotent relative adjunctions analogous to the non-relative setting~\cite[Proposition~2.8]{macdonald1982tower}. However, the following special case will be important.

\begin{definition}[{\cite[\S4]{diers1975jmonades}}]
    A functor $r \colon C \to E$ is \emph{$j$-reflective} (or is a \emph{$j$-reflection}) if it is \ff{} and admits a left $j$-relative adjoint $\ell \colon A \to C$.
\end{definition}

Note in particular that $j$-reflections are \emph{resolute} in the sense of \cite[Definition~6.1.4]{arkor2022monadic}, \ie{} the left $j$-relative adjoint $\ell$ is also left-adjoint to $r$ relative to the composite $r\ell$.

\begin{proposition}[{\cf{}~\cite[\S4]{diers1975jmonades}}]
    \label{j-reflections-are-idempotent}
    Every $j$-reflection is idempotent.
\end{proposition}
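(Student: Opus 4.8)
The plan is to compute the extension operator of the $j$\nbh relative monad $r\ell$ induced by a $j$\nbh reflection $r\colon C\to E$ (with left $j$\nbh relative adjoint $\ell\colon A\to C$) and observe that it is a composite of bijections, so that $r\ell$ is idempotent by \cref{idempotent-relative-monad}.

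Write $\phi_{a,c}\colon C(\ell a,c)\xrightarrow{\ \sim\ }E(ja,rc)$ for the natural bijection witnessing $\ell\radj j r$. Unwinding the construction of the induced relative monad (\cite[Theorem~5.24]{arkor2024formal}), the unit is $\eta_a=\phi_{a,\ell a}(1_{\ell a})$, and the extension operator is the composite
\[
\dag_{a,b}\colon E(ja,r\ell b)\xrightarrow{\ \phi^{-1}_{a,\ell b}\ }C(\ell a,\ell b)\xrightarrow{\ r\ }E(r\ell a,r\ell b),
\]
so that $f^\dag=r\bigl(\phi^{-1}_{a,\ell b}f\bigr)$ for $f\colon ja\to r\ell b$; that is, $f^\dag$ is $r$ applied to the adjoint transpose of $f$. (That this satisfies the relative monad axioms is exactly the content of the cited theorem, following from naturality of $\phi$ and the triangle identities, so I would not reprove it.) The first leg $\phi^{-1}_{a,\ell b}$ is a bijection by the definition of a relative adjunction, and the second leg — the action of $r$ on the hom-set $C(\ell a,\ell b)\to E(r\ell a,r\ell b)$ — is a bijection precisely because $r$ is \ff. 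Hence each component $\dag_{a,b}$ is a bijection, so $\dag$ is invertible, and therefore $r\ell$ is idempotent.

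I expect no substantial obstacle: the only point requiring care is recognising that the second factor of the extension operator is exactly the hom-action of $r$, so that \ffness{} of $r$ may be applied. An alternative, equally short route would go through characterisation (5) of \cref{idempotent-relative-monad-characterisations}: given $f\colon ja\to r\ell b$, its extension along $\eta_a$ is $r$ of its adjoint transpose $\ell a\to\ell b$ (existence), and any competing extension $g\colon r\ell a\to r\ell b$ with $g\eta_a=f$ lifts through $r$ (fullness) to a map $\ell a\to\ell b$ that must be the transpose of $f$ by the bijection $\phi$ (faithfulness), whence $g=r\bigl(\phi^{-1}_{a,\ell b}f\bigr)=f^\dag$ (uniqueness).
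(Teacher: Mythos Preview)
Your proof is correct and follows exactly the paper's approach: the extension operator of the induced relative monad is the composite $E(j, r\ell) \iso C(\ell, \ell) \to E(r\ell, r\ell)$, and the second map is a bijection precisely because $r$ is \ff{}. The paper's proof is a one-line version of your first argument.
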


\begin{proof}
    Suppose $\ell$ is left $j$-adjoint to $r$. The operator of the induced $j$-monad is given by $E(j, r\ell) \iso C(\ell, \ell) \iso E(r\ell, r\ell)$.
\end{proof}

The converse does not hold: that is, not every idempotent relative adjunction is a relative reflection (indeed, this is not true even for non-relative monads). Nonetheless, relative reflections appear to be an important notion for (algebraically) idempotent relative monads, as we shall show.

\subsection{Free algebras and reflective resolutions}

We fix a functor $j \colon A \to E$ and a $j$-relative monad $T = (t, \dag, \eta)$ for the remainder. Observe that the underlying functor $t \colon A \to E$ factors as
\[A \to \Kl(T) \to \Im(t) \to E\]
where $A \to \Im(t) \to E$ is the full image factorisation of $t$ into an \ioo{} functor followed by a \ff{} functor. The \ioo{} functor $\Kl(T) \to \Im(t)$ sends each morphism $f \colon ja \to tb$ to $f^\dag \colon ta \to tb$.

\begin{proposition}
    \label{Kl(T)-is-im(t)}
    Let $T = (t, \dag, \eta)$ be an relative monad. $T$ is idempotent if and only if the canonical functor $\Kl(T) \to \Im(t)$ is \ff{}, hence an isomorphism.
\end{proposition}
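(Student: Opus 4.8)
The plan is to unwind the construction of the comparison functor $\Kl(T) \to \Im(t)$ and observe that, on hom\nobreakdash-sets, it is literally the extension operator. First I would pin down the two categories: both $\Kl(T)$ and $\Im(t)$ have the objects of $A$ as their objects, with $\Kl(T)(a,b) = E(ja, tb)$ by definition of the Kleisli category and $\Im(t)(a,b) = E(ta, tb)$ by definition of the full image factorisation (taken, as the factorisation $A \to \Im(t) \to E$ requires, with the objects reindexed by $A$, so that $A \to \Im(t)$ is genuinely \ioo{}). The canonical functor $\Kl(T) \to \Im(t)$ is then the identity on objects, and its action on hom\nobreakdash-sets sends $f \colon ja \to tb$ to $f^\dag \colon ta \to tb$; that is, its component at $(a,b)$ is exactly $\dag_{a,b} \colon E(ja, tb) \to E(ta, tb)$.

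With this identification, the first equivalence is immediate: a functor is \ff{} exactly when each of its hom\nobreakdash-set maps is a bijection, and a natural transformation is invertible exactly when each of its components is. Hence $\Kl(T) \to \Im(t)$ is \ff{} if and only if every $\dag_{a,b}$ is a bijection, if and only if $\dag$ is invertible, which is by definition (\cref{idempotent-relative-monad}) the assertion that $T$ is idempotent.

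For the final clause, I would observe that $\Kl(T) \to \Im(t)$ is \ioo{}, hence bijective on objects; if it is moreover \ff{}, it is bijective on objects and on all hom\nobreakdash-sets, and so is an isomorphism of categories (with inverse on hom\nobreakdash-sets given by $\dag\inv = E(\eta, t)$, by \cref{idempotent-relative-monad-characterisations}, though this need not be spelled out).

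I do not expect any real obstacle here beyond bookkeeping: the proof is essentially the observation that the hom\nobreakdash-action of this functor \emph{is} the extension operator. The one point that warrants a little care is that $\Im(t)$ must be the full image formed so that $A \to \Im(t)$ is \ioo{} — rather than the skeletal full subcategory of $E$ on the objects $ta$ — since otherwise the comparison functor need not be \ioo{} and the ``hence an isomorphism'' conclusion would fail.
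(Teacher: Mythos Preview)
Your proposal is correct and takes essentially the same approach as the paper: the paper's proof is the one-line observation that the canonical \ioo{} functor acts on morphisms by $\dag$, so it is \ff{} precisely when $\dag$ is invertible. Your additional remarks about the \ioo{} formulation of $\Im(t)$ and the explicit inverse are sound elaborations but not required.
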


\begin{proof}
    The canonical \ioo{} functor is given on morphisms by $\dag$. Hence $\dag$ is invertible if and only if the canonical functor is \ff{}.
\end{proof}

\begin{corollary}
    \label{T-idempotent-iff-Kl(T)-is-j-reflective}
    The following are equivalent.
    \begin{enumerate}
        \item $T$ is idempotent.
        \item The Kleisli category $\Kl(T)$ forms a $j$-reflective resolution $k_T \radj j v_T$ of $T$.
        \item $T$ admits a $j$-reflective resolution.
    \end{enumerate}
\end{corollary}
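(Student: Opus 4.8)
The plan is to establish the equivalences in the cycle $(1) \Rightarrow (2) \Rightarrow (3) \Rightarrow (1)$, of which $(2) \Rightarrow (3)$ is immediate: condition $(2)$ exhibits a particular $j$-reflective resolution of $T$, so $(3)$ holds at once.

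For $(1) \Rightarrow (2)$, I would work with the factorisation $A \to \Kl(T) \to \Im(t) \to E$ of $t$ recorded just before \cref{Kl(T)-is-im(t)}, first noting that the resolution functor $v_T \colon \Kl(T) \to E$ \emph{is} the composite $\Kl(T) \to \Im(t) \hookrightarrow E$: both functors send $a$ to $ta$, and both send a Kleisli morphism $f \colon ja \to tb$ to $f^\dag \colon ta \to tb$. Assuming $T$ is idempotent, \cref{Kl(T)-is-im(t)} tells us the canonical \ioo{} functor $\Kl(T) \to \Im(t)$ is an isomorphism; hence $v_T$ is the composite of an isomorphism with the \ff{} inclusion $\Im(t) \hookrightarrow E$, and is therefore \ff. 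Since $k_T \radj j v_T$ is already a resolution of $T$ (\cref{opalgebra-resolution} and the discussion following it), and $k_T$ is a left $j$-relative adjoint of the \ff{} functor $v_T$, the functor $v_T$ is $j$-reflective and this resolution is $j$-reflective, which is $(2)$.

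For $(3) \Rightarrow (1)$: a $j$-reflective resolution of $T$ is a resolution $\ell \radj j r$ in which $r$ is a $j$-reflection; by \cref{j-reflections-are-idempotent} the relative monad induced by such a relative adjunction is idempotent, and since this induced relative monad is $T$ by hypothesis, $T$ is idempotent.

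The only real bookkeeping is in $(1) \Rightarrow (2)$: one must verify that the full-image factorisation of $t$ genuinely passes through $\Kl(T)$ in the way that identifies $v_T$ with the composite through $\Im(t)$ (equivalently, that $v_T$ acts by $f \mapsto f^\dag$ on morphisms, which follows from the Kleisli relative adjunction of \cref{opalgebra-resolution}). Granting that, the corollary is a direct assembly of \cref{Kl(T)-is-im(t)}, \cref{j-reflections-are-idempotent}, and \cref{opalgebra-resolution}, with no substantive new work.
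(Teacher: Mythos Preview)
Your proposal is correct and follows essentially the same approach as the paper: the cycle $(1)\Rightarrow(2)\Rightarrow(3)\Rightarrow(1)$, with $(1)\Rightarrow(2)$ via \cref{Kl(T)-is-im(t)} and the full-image factorisation, and $(3)\Rightarrow(1)$ via \cref{j-reflections-are-idempotent}. If anything, your version is slightly more careful in spelling out that $v_T$ coincides with the composite $\Kl(T)\to\Im(t)\hookrightarrow E$.
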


\begin{proof}
    (1) $\implies$ (2). If $T$ is idempotent, then the functor $v_T \colon \Kl(T) \to \Im(t)$ is \ff{} by \cref{Kl(T)-is-im(t)}, while $\Im(t) \to E$ is \ff{} by definition, so the composite is also \ff.
    
    (2) $\implies$ (3). Trivial.

    (3) $\implies$ (1). Immediate from \cref{j-reflections-are-idempotent}.
\end{proof}

\begin{corollary}[{\cf{}~\cite[\S4]{diers1975jmonades}}]
    If $T$ is idempotent, then the Kleisli category $\Kl(T)$ is initial amongst $j$-reflective resolutions of $T$.
\end{corollary}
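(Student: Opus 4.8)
The plan is to verify initiality directly from the universal property, exploiting that a $j$-reflection has a \ff{} right adjoint. First I would check that the statement is well-posed: since $T$ is idempotent, \cref{T-idempotent-iff-Kl(T)-is-j-reflective} tells us that $k_T \radj j v_T$ is itself a $j$-reflective resolution of $T$, so $\Kl(T)$ genuinely lies in the category of $j$-reflective resolutions. Next, fix an arbitrary $j$-reflective resolution $\ell \radj j r$ with apex $C$; since it resolves $T$ we have $r\ell = t$ as functors $A \to E$, and $r$ is \ff{} by hypothesis. The goal is to exhibit a unique morphism of resolutions $c \colon \Kl(T) \to C$, that is, a functor with $c \c k_T = \ell$ and $r \c c = v_T$.

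For existence, I would define $c$ to be $a \mapsto \ell a$ on objects and, on a morphism $f \colon ja \to tb$ of $\Kl(T)$, to be the unique morphism $\ell a \to \ell b$ that $r$ carries to $f^\dag \colon ta \to tb$, viewed as a morphism $r\ell a \to r\ell b$; this exists and is unique because $r$ is \ff{}. Functoriality of $c$ then reduces, via faithfulness of $r$, to the relative monad laws: the identity on $a$ in $\Kl(T)$ is $\eta_a$ and $(\eta_a)^\dag = 1_{ta} = r1_{\ell a}$, while Kleisli composition sends $g, f$ to $g^\dag f$ and $(g^\dag f)^\dag = g^\dag f^\dag = r\big(c(g) \c c(f)\big)$. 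By construction $r \c c = v_T$ (recalling that $v_T$ acts on morphisms by $f \mapsto f^\dag$), and $c \c k_T = \ell$ because for $\phi$ in $A$ we have $r\big(c(k_T\phi)\big) = (\eta_b \c j\phi)^\dag = t\phi = r(\ell\phi)$, using the description of the functorial action of $t$ followed by faithfulness of $r$. Hence $c$ is a morphism of resolutions.

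For uniqueness, suppose $c'$ is another morphism of resolutions $\Kl(T) \to C$. Since $k_T$ is \ioo{} and $c' \c k_T = \ell$, $c'$ agrees with $c$ on objects; and for every morphism $f$ of $\Kl(T)$ we have $r\big(c'(f)\big) = v_T(f) = r\big(c(f)\big)$, so faithfulness of $r$ forces $c' = c$. I do not expect a genuine obstacle; the only point requiring care is the bookkeeping of what \enquote{resolves $T$} unpacks to — namely $r\ell = t$ together with the explicit descriptions of $k_T$, $v_T$, and the functorial action $t\phi = (\eta_b \c j\phi)^\dag$ — and of the direction of a morphism of resolutions. Conceptually the content is precisely \cref{Kl(T)-is-im(t)}: idempotence makes $A \to \Kl(T) \xrightarrow{v_T} E$ the full image factorisation of $t = r\ell$, and $c$ is the comparison functor into $C$ produced by \ffness{} of $r$; an alternative, slicker write-up would invoke uniqueness of full image factorisations rather than constructing $c$ by hand.
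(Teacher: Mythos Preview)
Your argument is correct, but the paper's proof is a one-liner that bypasses the explicit construction entirely. The paper simply invokes the fact (already recorded there, via \cite[Corollary~6.41]{arkor2024formal}) that $k_T \radj j v_T$ is initial amongst \emph{all} resolutions of $T$; since $j$-reflective resolutions form a full subcategory of resolutions, and since idempotence of $T$ ensures (by \cref{T-idempotent-iff-Kl(T)-is-j-reflective}) that $\Kl(T)$ lies in this subcategory, initiality restricts immediately. By contrast, you construct the comparison functor $c$ by hand, using \ffness{} of $r$ to lift $f^\dag$ along $r$. This is essentially the standard construction of the Kleisli comparison functor (where one would ordinarily take $c(f)$ to be the adjoint transpose of $f \colon ja \to r\ell b$) specialised to the case where $r$ is \ff{}, so you are re-proving the general initiality of $\Kl(T)$ rather than citing it. Your approach is self-contained and makes the mechanism visible, while the paper's approach is shorter and emphasises that idempotence plays no role beyond placing $\Kl(T)$ in the relevant subcategory.
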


\begin{proof}
    $k_T \radj j v_T$ is initial amongst all resolutions of $T$. Hence, when $T$ is idempotent, so that $k_T \radj j v_T$ is $j$-reflective, it is furthermore initial amongst $j$-reflective resolutions.
\end{proof}

\subsection{Algebras and reflective resolutions}

To provide the promised analogue of \cref{forgetful-functor-is-ff} for relative monads, we give a characterisation of idempotent algebras in terms of morphisms of algebras.

\begin{lemma}
    \label{idempotence-via-morphisms}
    Let $(e', \rtimes')$ be a $T$-algebra. $(e', \rtimes')$ is idempotent if and only if, for every $T$-algebra $(e, \rtimes)$, every morphism from $e$ to $e'$ in $E$ is a $T$-algebra morphism.
\end{lemma}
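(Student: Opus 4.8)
The plan is to prove the two implications separately, using the $\eta$-orthogonality characterisation of idempotent algebras from \cref{idempotent-algebra-characterisations} for one direction, and the universal property of free algebras for the other.

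For the ``only if'' direction, suppose $(e', \rtimes')$ is idempotent, hence $\eta$-orthogonal. Let $(e, \rtimes)$ be any $T$-algebra and $\phi \colon e \to e'$ a morphism of $E$; I must verify the algebra-morphism law $\phi \c f^\rtimes = (\phi f)^{\rtimes'}$ for each $f \colon ja \to e$. Both sides are morphisms $ta \to e'$, and precomposing with $\eta_a$ and applying the first algebra axiom (twice) gives $(\phi \c f^\rtimes) \c \eta_a = \phi \c f = (\phi f)^{\rtimes'} \c \eta_a$. Since $e'$ is $\eta$-orthogonal, any two morphisms $ta \to e'$ agreeing after precomposition with $\eta_a$ coincide, so the law holds and $\phi$ is a $T$-algebra morphism.

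For the ``if'' direction, it suffices to apply the hypothesis to the free algebras. Fix $a \in A$ and let $\psi \colon ta \to e'$ be an arbitrary morphism of $E$. By hypothesis $\psi$ is an algebra morphism $(ta, \dag) \to (e', \rtimes')$, so $\psi \c g^\dag = (\psi g)^{\rtimes'}$ for every $g \colon jb \to ta$; taking $g \defeq \eta_a$ and using the unit law $(\eta_a)^\dag = 1_{ta}$ yields $\psi = (\psi \c \eta_a)^{\rtimes'}$. Thus $\rtimes' \c E(\eta, e') = 1_{E(t, e')}$. Combined with the first algebra axiom, which gives $E(\eta, e') \c \rtimes' = 1_{E(j, e')}$, this shows $\rtimes'$ is inverse to $E(\eta, e')$, so $(e', \rtimes')$ is idempotent by \cref{idempotent-algebra-characterisations}.

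There is no real obstacle here: the only point worth flagging is that, in the converse, one should resist using a general algebra and instead substitute a free algebra $(ta, \dag)$, whereupon the unit law $(\eta_a)^\dag = 1_{ta}$ immediately supplies the single composite identity needed to upgrade the split monomorphism $\rtimes'$ of \cref{idempotent-algebra-characterisations} to an isomorphism.
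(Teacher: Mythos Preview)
Your proof is correct and follows essentially the same approach as the paper. The only cosmetic difference is that, in the forward direction, the paper invokes idempotence in the form $(g\eta_a)^{\rtimes'} = g$ (condition~(3')/(4) of \cref{idempotent-algebra-characterisations}) to rewrite $\epsilon f^\rtimes$ directly, whereas you use the equivalent $\eta$-orthogonality formulation (condition~(5)) to conclude by uniqueness; the converse direction via free algebras is identical.
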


\begin{proof}
    Suppose that $(e', \rtimes')$ is idempotent, and let $f \colon ja \to e$ be a morphism. We have $\epsilon f^\rtimes \eta_a = \epsilon f$ using the unit law for $e$. Thus, $\epsilon f^\rtimes = (\epsilon f^\rtimes \eta_a)^{\rtimes'} = (\epsilon f)^{\rtimes'}$ using idempotence of $(e', \rtimes')$, so that $\epsilon$ is a $T$-algebra morphism.

    Conversely, suppose that every morphism from the carrier of a $T$-algebra to $e'$ is a $T$-algebra morphism, and let $f \colon tx \to e'$ be a morphism. $ta$ is the carrier of the free $T$-algebra on $a \in A$ so that the $T$-algebra morphism law with respect to $\eta_a \colon ja \to ta$ gives that $(f \eta)^{\rtimes'} = f {\eta_a}^\dag = f$ using the unit law for the relative monad. Thus $(e', \rtimes')$ is idempotent.
\end{proof}

\begin{proposition}[{\cf{}~\cite[\S4]{diers1975jmonades}}]
    \label{Alg_idem-is-universal-j-reflective-resolution}
    Suppose that $T$ is idempotent. Then $\Alg\idem(T) \to E$ is $j$-reflective. Furthermore, it is terminal amongst reflective resolutions of $T$.
\end{proposition}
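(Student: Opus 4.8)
The plan is to prove the two assertions in turn, in both cases bootstrapping from the corresponding facts about the full \EM{} resolution $f_T \radj j u_T$.

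For the first assertion, the forgetful functor $\Alg\idem(T) \to E$ is faithful automatically, and it is full by \cref{idempotence-via-morphisms}: if $(e',\rtimes')$ is idempotent then every morphism to $e'$ from the carrier of a $T$-algebra is a $T$-algebra morphism, so in particular every morphism between carriers of idempotent algebras is. For the left $j$-relative adjoint, \cref{idempotent-iff-free-algebras-are-idempotent} gives (as $T$ is idempotent) that every free $T$-algebra is idempotent, so $f_T \colon A \to \Alg(T)$ factors through the full inclusion $\iota \colon \Alg\idem(T) \ffto \Alg(T)$ as some $f_T'$. Fullness of $\iota$ then lets the natural bijection $\Alg(T)(f_T a, (e,\rtimes)) \iso E(ja, e)$ of \cref{algebra-resolution} restrict to a natural bijection $\Alg\idem(T)(f_T' a, (e',\rtimes')) \iso E(ja, e')$, exhibiting $f_T' \radj j u$; since this is literally the restriction of the bijection for $f_T \radj j u_T$, the induced unit and extension operator are again $\eta$ and $\dag$, so $f_T' \radj j u$ is a reflective resolution of $T$. (Note the asymmetry with the Kleisli case: $\Alg(T) \to E$ is not itself $j$-reflective in general, which is precisely why one must pass to $\Alg\idem(T)$.)

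For the second assertion, let $\ell \radj j r$ be any reflective resolution of $T$, with apex $C$ and natural bijection $\phi \colon E(ja, rc) \iso C(\ell a, c)$; being a resolution of $T$ it satisfies $t = r\ell$ and $\eta_a = \phi^{-1}(1_{\ell a})$. Since $f_T \radj j u_T$ is terminal amongst all resolutions of $T$, there is a unique morphism of resolutions $c \colon C \to \Alg(T)$, so $u_T c = r$ and $c\ell = f_T$. The key step is to show that $c$ lands in $\Alg\idem(T)$, i.e.\ that $c(x)$ is idempotent for each $x \in C$. Its carrier is $u_T c(x) = rx$, and by \cref{idempotent-algebra-characterisations} it suffices that $rx$ be $\eta$-orthogonal, equivalently that $E(\eta_a, rx) \colon E(ta, rx) \to E(ja, rx)$ be a bijection for each $a$. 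Here full faithfulness of $r$ makes $r_{\ell a, x} \colon C(\ell a, x) \to E(r\ell a, rx) = E(ta, rx)$ a bijection, and naturality of $\phi$ together with $\eta_a = \phi^{-1}(1_{\ell a})$ shows that the composite $E(\eta_a, rx) \c r_{\ell a, x}$ sends $g \colon \ell a \to x$ to $rg \c \eta_a = \phi^{-1}(g)$, hence is the bijection $\phi^{-1}$; therefore $E(\eta_a, rx)$ is a bijection.

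It remains to assemble these. Factoring $c = \iota c'$ with $c' \colon C \to \Alg\idem(T)$, we get $u c' = u_T \iota c' = u_T c = r$, and cancelling the injective, faithful $\iota$ from $\iota c' \ell = c\ell = f_T = \iota f_T'$ gives $c'\ell = f_T'$, so $c'$ is a morphism of resolutions $(\ell \radj j r) \to (f_T' \radj j u)$; any other such morphism $c''$ yields $\iota c''$ a morphism of resolutions into $f_T \radj j u_T$, forcing $\iota c'' = c$ and hence $c'' = c'$. This establishes terminality amongst reflective resolutions. The only non-formal step is the verification that the comparison functor $C \to \Alg(T)$ takes values in idempotent algebras, which is exactly where reflectivity (full faithfulness of $r$) is used essentially; everything else is bookkeeping with the universal properties.
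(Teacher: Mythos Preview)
Your proof is correct and follows essentially the same approach as the paper's: factor $f_T$ through $\Alg\idem(T)$ using \cref{idempotent-iff-free-algebras-are-idempotent}, obtain fullness of the forgetful functor from \cref{idempotence-via-morphisms}, and for terminality show that the comparison functor from any reflective resolution lands in idempotent algebras by exploiting \ffness{} of $r$. The paper's proof is terser, merely asserting that ``$r$ induces an idempotent $T$-algebra structure on $rc$'' and deferring to the classical argument, whereas you spell out the $\eta$-orthogonality computation and the uniqueness bookkeeping explicitly; but the underlying strategy is the same.
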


\begin{proof}
    Since $T$ is idempotent, free algebras are idempotent, so that $f_T \colon A \to \Alg(T)$ factors through $\Alg\idem(T)$. This forms a relative adjunction for the same reason as $\Alg(T)$, and it is trivially a resolution of $T$ (abstractly, this follows from \ffness{} of $\Alg\idem(T) \to \Alg(T)$~\cite[Remark~5.33]{arkor2024formal}). Furthermore, the forgetful functor $\Alg\idem(T) \to \Alg(T) \to E$ is full by \cref{idempotence-via-morphisms}, hence \ff{}. The proof of terminality goes through in essentially the same way as in the classical case, observing that, for a $j$-reflective resolution of $T$, the right $j$-adjoint $r \colon C \to E$ induces an idempotent $T$-algebra structure on $rc$ for each $c \in C$.
\end{proof}

\begin{corollary}
    \label{T-algebraically-idempotent-iff-Alg(T)-is-j-reflective}
    $T$ is algebraically idempotent if and only if the forgetful functor $u_T \colon \Alg(T) \to E$ is $j$-reflective.
\end{corollary}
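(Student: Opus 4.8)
The plan is to reduce both directions to \cref{idempotence-via-morphisms}, using the observation that the forgetful functor $u_T \colon \Alg(T) \to E$ is always faithful and always admits the left $j$-relative adjoint $f_T$ of \cref{algebra-resolution}. Consequently, the only content of the assertion ``$u_T$ is $j$-reflective'' is that $u_T$ is full; and fullness of $u_T$ unwinds to exactly the statement that every morphism in $E$ between the carriers of two $T$-algebras is a $T$-algebra morphism.

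First I would record that $u_T$ is faithful: a $T$-algebra morphism is by definition a morphism of $E$ satisfying a property, so distinct $T$-algebra morphisms have distinct underlying morphisms. Together with \cref{algebra-resolution}, this shows that ``$u_T$ is $j$-reflective'' is equivalent to ``$u_T$ is \ff'', which (since faithfulness is automatic) is equivalent to ``$u_T$ is full''. Moreover, fullness of $u_T$ means precisely that for every pair of $T$-algebras $(e, \rtimes)$, $(e', \rtimes')$ and every morphism $g \colon e \to e'$ in $E$, the morphism $g$ lifts along $u_T$; as $u_T$ merely forgets structure, the lift has underlying morphism $g$, so this says exactly that $g$ is itself a $T$-algebra morphism.

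For the forward implication, suppose $T$ is algebraically idempotent, so every $T$-algebra is idempotent. Applying \cref{idempotence-via-morphisms} to each $T$-algebra $(e', \rtimes')$ in turn, every morphism from the carrier of a $T$-algebra to $e'$ is a $T$-algebra morphism; ranging over all $(e', \rtimes')$, this is precisely fullness of $u_T$, hence $u_T$ is \ff{} and so $j$-reflective. For the converse, suppose $u_T$ is $j$-reflective, hence full. Fix an arbitrary $T$-algebra $(e', \rtimes')$; for any $T$-algebra $(e, \rtimes)$, fullness shows every morphism $e \to e'$ in $E$ is a $T$-algebra morphism, so by \cref{idempotence-via-morphisms} the algebra $(e', \rtimes')$ is idempotent. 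Since $(e', \rtimes')$ was arbitrary, $T$ is algebraically idempotent.

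I do not expect a genuine obstacle: the corollary is essentially a repackaging of \cref{idempotence-via-morphisms}, and the only point requiring care is the translation between ``fullness of $u_T$'' and the hypothesis of \cref{idempotence-via-morphisms}, which hinges on faithfulness of $u_T$ forcing any lift along $u_T$ to be the original morphism of $E$.
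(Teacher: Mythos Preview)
Your proof is correct and takes essentially the same approach as the paper: both directions rest on \cref{idempotence-via-morphisms}, translating algebraic idempotence into fullness of $u_T$. The only difference is cosmetic: for the forward implication the paper routes through \cref{Alg_idem-is-universal-j-reflective-resolution} (whose proof itself invokes \cref{idempotence-via-morphisms}), whereas you invoke \cref{idempotence-via-morphisms} directly, which is marginally more self-contained.
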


\begin{proof}
    If $T$ is algebraically idempotent, then $\Alg\idem(T) \to \Alg(T)$ is an isomorphism, from which $j$-reflectivity follows from \cref{Alg_idem-is-universal-j-reflective-resolution}. Conversely, if $u_T$ is full, every $T$-algebra is idempotent by \cref{idempotence-via-morphisms}.
\end{proof}

For idempotent (non-relative) monads, every algebra is free, \ie{} the comparison functor $\b{Kl}(T) \to \Alg(T)$ is essentially surjective, hence an equivalence. This property is necessary to prove, for instance, that every reflective adjunction is monadic. In contrast, this phenomenon no longer holds for (even algebraically) idempotent relative monads.

\begin{example}[Algebraic idempotence $\centernot\implies$ every algebra is free]
    Let $E$ be the free category on the following graph.
    \[\begin{tikzcd}
    	j & t & e
    	\arrow["\eta", from=1-1, to=1-2]
    	\arrow["f", from=1-2, to=1-3]
    \end{tikzcd}\]
    The unique induced $(1 \to E)$-relative monad $T$ is idempotent, as is the unique algebra structure on $e$, hence $T$ is algebraically idempotent. The Kleisli category of $T$ is the trivial category. The category of algebras has two objects: $(t, \dag)$ and $(e, \rtimes)$, which are not isomorphic since $f$ is not invertible. Consequently the comparison functor $\b{Kl}(T) \to \Alg(T)$ is not an equivalence.
\end{example}

Consequently, for an algebraically idempotent relative monad $T$, we have that $v_T \colon \Kl(T) \to E$ and $u_T \colon \Alg(T) \to E$ are both $j$-reflective by \cref{T-idempotent-iff-Kl(T)-is-j-reflective,T-algebraically-idempotent-iff-Alg(T)-is-j-reflective}, but $v_T$ is not necessarily $j$-relatively monadic in the sense of \cite[Definition~4.1]{arkor2024relative}. Therefore, it is not generally true that $j$-reflections are $j$-relatively monadic.

\subsection{Algebraic idempotence from idempotence}

We conclude by establishing sufficient conditions for a relative monad to be algebraically idempotent as soon as it is idempotent. First, we present a method to deduce the algebraic idempotence of one relative monad from the algebraic idempotence of another (whose verification in practice is expected to be simpler). Recall from \cref{composing} that, given functors $j \colon A \to E$ and $j' \colon E \to I$, every $j'$-relative monad $T$ induces a $j'j$-relative monad $Tj$ by precomposing $j$. Furthermore, every $T$-algebra induces a $Tj$-algebra, inducing a functor $\Alg(T) \to \Alg(Tj)$ that commutes with the forgetful functors~\cite[Proposition~6.58]{arkor2024formal}.

\begin{definition}[{\cite[\S5.4]{arkor2022monadic}}]
    Let $j \colon A \to E$ and $j' \colon E \to I$ be functors. A $j'$-relative monad $T$ is \emph{$j$-ary} if the canonical functor $\Alg(T) \to \Alg(Tj)$ is an equivalence.
\end{definition}

Conceptually, a $j'$-relative monad $T$ is $j$-ary if it is presented by the data of a $j'j$-relative monad.

\begin{proposition}
    \label{j-ary-and-algebraic-idempotence}
    Let $j \colon A \to E$ and $j' \colon E \to I$ be functors and let $T$ be a $j'$-relative monad. If $T$ is $j$-ary, then $T$ is algebraically idempotent if and only if $Tj$ is algebraically idempotent.
\end{proposition}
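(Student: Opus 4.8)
The plan is to reduce the claimed equivalence to the full faithfulness of the relevant forgetful functor and then transport this property across the equivalence of categories supplied by $j$-ariness. By \cref{T-algebraically-idempotent-iff-Alg(T)-is-j-reflective} applied to the $j'$-relative monad $T$, algebraic idempotence of $T$ is equivalent to the forgetful functor $u_T\colon\Alg(T)\to I$ being $j'$-reflective; since $u_T$ always admits a left $j'$-relative adjoint — the free-algebra functor $f_T$ of \cref{algebra-resolution} — this is equivalent to $u_T$ being \ff. Applying the same corollary to the $j'j$-relative monad $Tj$ (using that $u_{Tj}\colon\Alg(Tj)\to I$ carries the left $j'j$-relative adjoint $f_{Tj}$), algebraic idempotence of $Tj$ is equivalent to $u_{Tj}$ being \ff. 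Hence it suffices to show that $u_T$ is \ff{} if and only if $u_{Tj}$ is.

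For this I would use that the canonical functor $\Phi\colon\Alg(T)\to\Alg(Tj)$ discussed above, induced by precomposition with $j$, commutes with the forgetful functors, so that $u_{Tj}\circ\Phi=u_T$, and that by hypothesis $\Phi$ is an equivalence of categories. An equivalence is \ff{} and admits a quasi-inverse $\Psi$; thus, if $u_{Tj}$ is \ff{} then $u_T=u_{Tj}\circ\Phi$ is \ff{} as a composite of \ff{} functors, while if $u_T$ is \ff{} then $u_{Tj}\cong u_{Tj}\circ\Phi\circ\Psi=u_T\circ\Psi$ is \ff, using invariance of \ffness{} under natural isomorphism. Combining this with the reduction of the previous paragraph proves the proposition.

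The argument is essentially formal, and I do not anticipate a serious obstacle; the only delicate point is the passage from "$j'$-reflective" to plain full faithfulness, which is legitimate precisely because each forgetful functor in play already carries a left relative adjoint, so that reflectivity imposes no condition beyond full faithfulness. (One could instead argue more concretely via \cref{idempotence-via-morphisms}: full faithfulness of $\Phi$ means that a morphism in $I$ between carriers of $T$-algebras is a $T$-algebra morphism exactly when it is a $Tj$-algebra morphism between the restricted algebras, so a $T$-algebra is idempotent if and only if its image under $\Phi$ is an idempotent $Tj$-algebra; together with essential surjectivity of $\Phi$ this yields the equivalence of the two notions of algebraic idempotence. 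This route avoids the reflectivity corollary at the cost of a slightly longer bookkeeping step.)
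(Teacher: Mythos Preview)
Your proposal is correct and follows essentially the same route as the paper: reduce algebraic idempotence to full faithfulness of the forgetful functor via \cref{T-algebraically-idempotent-iff-Alg(T)-is-j-reflective}, then transport full faithfulness across the commuting triangle $u_{Tj}\circ\Phi = u_T$ with $\Phi$ an equivalence. Your write-up is more explicit than the paper's (spelling out the quasi-inverse argument and the reason reflectivity collapses to full faithfulness), and you correctly place the forgetful functors as landing in $I$ rather than $E$.
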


\begin{proof}
    Under the assumption that $T$ is $j$-ary, we have a commutative triangle as follows.
    \[\begin{tikzcd}
    	{\Alg(T)} && {\Alg(Tj)} \\
    	& E
    	\arrow["\equiv", from=1-1, to=1-3]
    	\arrow["{u_T}"', from=1-1, to=2-2]
    	\arrow["{u_{Tj}}", from=1-3, to=2-2]
    \end{tikzcd}\]
    Consequently, each of the functors $u_T$ and $u_{Tj}$ is \ff{} if and only if the other is, which characterises algebraic idempotence by \cref{T-algebraically-idempotent-iff-Alg(T)-is-j-reflective}.
\end{proof}

Next, we observe that idempotence of a monad may in some situations be inferred from idempotence of the relative monad induced by precomposing a functor, giving a partial converse to \cref{composing}.

\begin{proposition}
    \label{idempotence-and-rank}
    Let $j \colon A \to E$ be a functor and let $T = (t, \mu, \eta)$ be a monad on $E$. If $t$ exhibits the left extension $j \lx (tj)$ of $tj \colon A \to E$ along $j$, then $T$ is idempotent if and only if $Tj$ is idempotent.
\end{proposition}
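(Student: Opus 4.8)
The plan is as follows. The forward implication ($T$ idempotent $\implies$ $Tj$ idempotent) is already \cref{composing}, so we must prove the converse. Suppose $Tj$ is idempotent. By \cref{idempotent-monad-characterisations} it suffices to show $t\eta = \eta t \colon t \tto tt$. Because $t$ exhibits the left extension $j \lx tj$, precomposition with $j$ induces a bijection between natural transformations $t \tto tt$ and natural transformations $tj \tto ttj$; hence $t\eta = \eta t$ will follow once we know $t\eta_{ja} = \eta_{tja} \colon tja \to t(tja)$ for every object $a \in A$ (equivalently, that the whiskered transformations $t\cdot(\eta j)$ and $\eta\cdot(tj)$ coincide).

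Fix $a$. By naturality of $\eta$ applied to $\eta_{ja} \colon ja \to tja$, the two morphisms $t\eta_{ja}$ and $\eta_{tja}$ become equal after precomposition with $\eta_{ja}$. Now let $k \colon t(tja) \to tjc$ be any morphism into a value of $tj$. By idempotence of $Tj$, every morphism $g \colon tja \to tjc$ satisfies $(g\eta_{ja})^{\dag_{Tj}} = g$ (the explicit form of \cref{idempotent-relative-monad-characterisations}); applying this to $g = k\c t\eta_{ja}$ and to $g = k\c\eta_{tja}$, and using the equality of the previous sentence, yields $k\c t\eta_{ja} = k\c\eta_{tja}$. Thus $t\eta_{ja}$ and $\eta_{tja}$ are equalised by \emph{every} morphism from $t(tja)$ into a free $Tj$-algebra, and it remains to deduce that $t\eta_{ja} = \eta_{tja}$ itself -- that is, that the object $t(tja)$ is jointly cogenerated by the objects $tjc$. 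This is where the hypothesis that $t$ exhibits $j \lx tj$ is used, in conjunction with the fact (a consequence of idempotence of $Tj$ via \cref{idempotent-iff-free-algebras-are-idempotent} and \cref{idempotent-algebra-characterisations}) that every $tjc$ is $\eta$\nobreakdash-orthogonal. An alternative organisation of the same point: since $Tj$ is idempotent, the canonical functor $\Kl(Tj) \to \Im(tj)$ is an isomorphism and $\Im(tj) \ffto E$ is a $j$-reflection (\cref{Kl(T)-is-im(t)}, \cref{T-idempotent-iff-Kl(T)-is-j-reflective}); the hypothesis then exhibits $t$ as (essentially) the reflector onto $\Im(tj)$, from which idempotence of the induced monad, hence of $T$, follows.

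The main obstacle is precisely this last deduction. The left-extension hypothesis on $t$ is naturally a statement about morphisms \emph{out of} the values $t(e)$ -- via the colimit/density description of $j \lx tj$ -- whereas $\eta$-orthogonality is a condition on morphisms \emph{into} an object; the work lies in combining these so that $t$ is seen to carry $\eta$-orthogonal objects to objects that still detect the relevant equalities of maps, so that "agreement after restriction along $\eta j$" genuinely forces equality. I expect this to go through by exploiting the density presentation of $t = j \lx tj$ together with the $\eta$-orthogonality of all the free $Tj$-algebras $tjc$ -- it is the interaction of these two ingredients, rather than either in isolation, that closes the argument, and this is the step demanding care.
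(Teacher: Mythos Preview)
Your overall strategy is exactly the paper's: show $t\eta = \eta t$ (via \cref{idempotent-monad-characterisations}); use the left-extension hypothesis to reduce this to the restricted equality $t\eta_{ja} = \eta_{tja}$ for all $a$; and observe, via naturality of $\eta$ at $\eta_{ja}$, that these two morphisms agree after precomposition with $\eta_{ja}$.

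The gap is precisely the one you flag yourself. You correctly note that idempotence of $Tj$ directly yields injectivity of $E(\eta_{ja},-)$ only for codomains of the form $tjb$, whereas here the codomain is $ttja$. Your proposed workarounds do not close the gap: the claim that the objects $tjc$ jointly cogenerate $ttja$ does not follow from the left-extension hypothesis (which, as you say, governs maps \emph{out of} values of $t$, not into them), and the ``alternative organisation'' does not go through either --- the hypothesis $t \cong j \lx tj$ does not by itself force $t$ to land in $\Im(tj)$, so you cannot simply identify $t$ with the reflector onto $\Im(tj)$. As written, your argument therefore stops short of establishing $t\eta_{ja} = \eta_{tja}$; the paragraph beginning ``I expect this to go through'' is a hope, not a proof.

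For comparison, the paper does not take this detour at all. At exactly this point it asserts in one line that, since $Tj$ is idempotent, precomposition with $\eta_{ja}$ is a bijection of hom-sets, and concludes $t\eta_{ja} = \eta_{tja}$ directly from the naturality square; only then does it invoke the left-extension isomorphism $[E,E](t,tt) \cong [A,E](tj,ttj)$ to pass to $t\eta = \eta t$. So the step you spend most of your proposal worrying about is treated by the paper as immediate, with no auxiliary cogeneration or reflector argument.
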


\begin{proof}
    Suppose that $Tj$ is idempotent. Naturality of $\eta$ implies the following diagram commutes for all $a \in A$.
    \[\begin{tikzcd}
    	tja & ttja \\
    	ja & tja
    	\arrow["{t\eta_{ja}}", from=1-1, to=1-2]
    	\arrow["{\eta_{ja}}", from=2-1, to=1-1]
    	\arrow["{\eta_{ja}}"', from=2-1, to=2-2]
    	\arrow["{\eta_{tja}}"', from=2-2, to=1-2]
    \end{tikzcd}\]
    Since $Tj$ is idempotent, precomposition with $\eta_{ja}$ exhibits a bijection of hom-sets, and so the above implies $t\eta_{ja} = \eta_{tja}$. By assumption, we have that precomposition by $j$ induces an isomorphism
    $[E, E](t, tt) \iso [E, E](j \lx (tj), tt) \iso [A, E](tj, ttj)$. Since $t\eta$ and $\eta t$ have the same whiskering with $j$, they are thus equal. Hence $T$ is idempotent. The converse holds by \cref{composing}.
\end{proof}

We deduce that, for relative monads with \emph{well-behaved} roots in the sense of \cite[Definition~4.1]{altenkirch2015monads}, which correspond to monads relative to free cocompletions~\cite[215]{szlachanyi2017tensor}, there is no distinction between idempotence and algebraic idempotence.

\begin{corollary}
    Let $\Phi$ be a class of small categories and let $A$ be a small category. Denote by $\phi_A \colon A \to \overline\Phi(A)$ the free cocompletion of $A$ under $\Phi$-colimits. A $\phi_A$-relative monad is idempotent if and only if it is algebraically idempotent.
\end{corollary}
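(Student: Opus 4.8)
The plan is to reduce to the non-relative setting and then chain together the two propositions of this subsection. Write $j \defeq \phi_A$ and $E \defeq \overline\Phi(A)$, so that $j \colon A \to E$ is the free cocompletion. The first step is to invoke the fact that $j$ is a \emph{well-behaved root} in the sense of \cite{altenkirch2015monads} --- equivalently, that free cocompletions are precisely the well-behaved roots \cite{szlachanyi2017tensor} --- so that every $j$-relative monad $S$, with underlying functor $t_S \colon A \to E$, is induced by precomposing an essentially unique monad $T$ on $E$ along $j$, obtained by transporting the relative-monad structure along the left Kan extension $\mathrm{Lan}_j$. I would then record the two structural features of $T$ that feed the propositions: (a) the underlying functor $t_T$ exhibits the left extension $j \lx (t_T j)$ of $t_T j \iso t_S$ along $j$, which holds because $E$ is the free cocompletion of $A$ under $\Phi$-colimits and hence $j$ is $\Phi$-dense, so the $\Phi$-cocontinuous extension of $t_S$ along $j$ is a (pointwise) left Kan extension; and (b) $T$ is $j$-ary, since the algebras of a monad on $E$ whose functor part preserves $\Phi$-colimits are determined by their restriction along $j$ (every object of $E$ being a $\Phi$-colimit of objects in the image of $j$). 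Both (a) and (b) belong to the standard theory of well-behaved roots and free cocompletions, and I would cite \cite{altenkirch2015monads, szlachanyi2017tensor, arkor2022monadic} rather than reprove them.

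With these in hand, the corollary falls out by concatenating three equivalences. Applying \cref{idempotence-and-rank} to the monad $T$ on $E$ together with the functor $j \colon A \to E$, property (a) gives that $T$ is idempotent if and only if $Tj \iso S$ is idempotent. Next, by \cref{idempotent-monad-is-algebraically-idempotent} an idempotent non-relative monad is always algebraically idempotent, while conversely every algebraically idempotent relative monad is idempotent (\cref{idempotent-iff-free-algebras-are-idempotent}), so the statements that $T$ is idempotent and that $T$ is algebraically idempotent are equivalent. Finally, property (b) lets me apply \cref{j-ary-and-algebraic-idempotence}: $T$ is algebraically idempotent if and only if $Tj \iso S$ is algebraically idempotent. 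Chaining these three equivalences yields that $S$ is idempotent if and only if $S$ is algebraically idempotent.

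The main obstacle is entirely in the first paragraph: making precise that a $\phi_A$-relative monad corresponds to a $\Phi$-cocontinuous monad $T$ on $\overline\Phi(A)$ in a way that simultaneously secures hypothesis (a) --- that $t_T$ is genuinely a left Kan extension of $t_S$ along $\phi_A$ --- and hypothesis (b) --- that $T$ is $\phi_A$-ary. Everything after that is a formal consequence of results already established. A minor point to watch is the correct instantiation of \cref{idempotence-and-rank}, which is stated for a monad on a category $E$ paired with a functor into $E$; here that functor is $\phi_A \colon A \to \overline\Phi(A)$ itself.
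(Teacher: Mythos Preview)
Your proposal is correct and follows essentially the same route as the paper: extend the $\phi_A$-relative monad to a monad on $\overline\Phi(A)$ via left Kan extension (the paper writes this as $\phi_A \lx T$ and cites \cite[Theorem~8.3]{arkor2025bicategories} for the fact that it is $\phi_A$-ary and satisfies the left-extension hypothesis), then chain \cref{idempotence-and-rank}, \cref{idempotent-monad-is-algebraically-idempotent}, and \cref{j-ary-and-algebraic-idempotence} exactly as you do. Your identification of the only nontrivial step --- securing properties (a) and (b) of the extended monad --- matches the paper's, which dispatches it with a single citation.
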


\begin{proof}
    Let $T$ be a $\phi_A$-relative monad. By \cite[Theorem~8.3]{arkor2025bicategories}, $T$ extends via left extension along $\phi_A$ to a $\phi_A$-ary monad $(\phi_A \lx T)$ on $\overline\Phi(A)$. This monad thus satisfies the assumptions of \cref{idempotence-and-rank}, so that it is idempotent if and only if $T$ is idempotent. Furthermore, by \cref{idempotent-monad-is-algebraically-idempotent}, $(\phi_A \lx T)$ is idempotent if and only if it is algebraically idempotent. Finally, by \cref{j-ary-and-algebraic-idempotence}, $(\phi_A \lx T)$ is algebraically idempotent if and only if $T$ is algebraically idempotent.
\end{proof}

\printbibliography

\end{document}